\newif\iffullversion
\newcommand{\R}{\mathbb{R}}
\newcommand{\eps}{\epsilon}
\newcommand{\cost}{\mathrm{cost}}
\newcommand{\field}{\mathbb{K}}
\newcommand{\grade}{\mathrm{gr}}
\newcommand{\col}{\mathrm{col}}
\newcommand{\pres}{P}
\newcommand{\row}{\mathrm{row}}
\newcommand{\spann}{\mathrm{span}}
\newcommand{\push}{\mathrm{push}}
\newcommand{\domain}{\Omega}
\newcommand{\onehalforone}{c}
\newcommand{\region}{R}
\newcommand{\ring}{\mathcal{R}}
\newcommand{\face}{f}
\newcommand{\dmatch}{d_{\mathrm{match}}}
\newcommand{\numgenerators}{k}
\newtheorem*{example*}{Example}
\title{Exact Computation of the Matching Distance on 2-Parameter Persistence Modules}
\author{Michael Kerber}{Graz University of Technology, Graz, Austria}{kerber@tugraz.at}{https://orcid.org/0000-0002-8030-9299}{Supported by Austrian Science Fund (FWF) grant number P 29984-N35}
\author{Michael Lesnick}{University at Albany, SUNY, United States}{mlesnick@albany.edu}{https://orcid.org/0000-0003-1924-3283}{}
\author{Steve Oudot}{Inria Saclay -- \^Ile-de-France, Palaiseau, France}{steve.oudot@inria.fr}{https://orcid.org/0000-0003-2939-9417}{}
\authorrunning{M. Kerber, M. Lesnick and S. Oudot}
\keywords{Topological Data Analysis, Multi-Parameter Persistence,
Line arrangements}
\begin{document}

\maketitle
\begin{abstract}
The matching distance is a pseudometric on multi-parameter persistence modules, defined in terms of the 
weighted bottleneck distance on the restriction of the modules to affine lines.  It is known that this distance is stable in a reasonable sense, and can be efficiently approximated, which makes it a promising tool for practical applications.  In this work, we show that in the 2-parameter setting, the matching distance can be computed 
exactly in polynomial time.  Our approach subdivides the space of affine lines into regions, via a line arrangement. 
In each region, the matching distance restricts to a simple 
analytic function, whose maximum is easily computed.  As a byproduct, our analysis establishes that the matching distance
is a rational number, if the bigrades of the input modules are rational.
\end{abstract}

\section{Introduction}\label{Sec:Intro}
Multi-parameter persistent homology is receiving growing
attention, both from the theoretical and computational points of view.
Its motivation lies in the possibility of extending the success
of topological data analysis to settings where the structure of data is best captured by 2-parameter rather than 1-parameter constructions.  
The basic algebraic objects of study in multi-parameter persistence are certain commutative diagrams of vector spaces called \emph{persistence modules}.  
In the 1-parameter setting, persistence modules decompose in an essentially unique way into simple summands called \emph{interval modules}.  The decomposition is specified by a discrete invariant called a \emph{persistence diagram}.  In contrast, the algebraic structure of a 2-parameter persistence module (henceforth, \emph{bipersistence module}) can be far more complex.  As a result, a good definition of persistence diagram is unavailable for bipersistence modules~\cite{Carlsson2009}. 

Nevertheless, it is still possible
to define meaningful notions of distance between 
 multi-parameter persistence modules.  Distances on 1-parameter persistence modules play an essential role in both theory and applications.  To extend such theory and applications to the multi-parameter setting, one needs to select a suitable distance on multi-parameter persistence modules.  However, progress on finding well-behaved, efficiently computable distances on multi-parameter persistence modules has been slow, and this is been an impediment to progress in practical applications.

The most widely studied and applied distances in the 1-parameter setting are the \emph{bottleneck distance} and the \emph{Wasserstein distance}~\cite{EdelsbrunnerHarer2010,Oudot2015}.
Both can be efficiently computed via publicly available code~\cite{kmn-geometry}. In the multi-parameter setting, the distance that has received the most attention is the \emph{interleaving distance}. On 1-parameter persistence modules, the interleaving and bottleneck distances are equal~\cite{Lesnick2015}.  The interleaving distance is theoretically well-behaved; in particular, on modules over prime fields, it is
the most discriminative distance among the ones satisfying a certain stability condition~\cite{Lesnick2015}.  However, it was proven recently
to be NP-hard to compute or even approximate to any constant factor less than~three on bipersistence modules, even under the restriction that the modules decompose as direct sums of {\em interval modules}~\cite{Bjerkevik}.
While the problem lies in $P$ when further restricting the focus to the sub-class of interval modules themselves~\cite{dx-computing}, in practice one often encounters modules that are not interval modules (nor even direct sums thereof).

This motivates the search for a more computable surrogate for the
interleaving distance.  One natural candidate is the \emph{matching distance}, introduced by
Cerri et al.  \cite{cerri2013betti}. It is a lower bound for the
interleaving distance; this is implicit in \cite{cerri2013betti} and
shown explicitly in \cite{Landi2018}.  In the 2-parameter setting, the
matching distance is defined as follows: Given a pair of bipersistence
modules, we call an affine line $\ell$ in parameter space with
positive slope a \emph{slice}.  Restricting the modules to $\ell$
yields a pair of 1-parameter persistence modules, which we
call \emph{slice modules}.  These slice modules have a well-defined
bottleneck distance, which we multiply by a positive weight 
to ensure that the matching distance will be a lower bound for the
interleaving distance.  The matching distance is defined as the
supremum of these weighted bottleneck distances over all slices.
See \cref{sec:matching_distance} for the precise definition.  The
definition generalizes readily to $n$-parameter persistence modules,
for any $n\geq 1$; when $n=1$, the matching distance is equal to the
bottleneck distance.

As Cerri et al. have observed, to approximate the matching distance up to any (absolute) precision, it suffices to sample the space of slices
sufficiently densely and to return the maximum weighted bottleneck distance
encountered.  For a constant number of scale parameters
and approximation quality $\eps$,
a polynomial number of slices are sufficient in terms of module size and $\frac{1}{\eps}$, yielding a polynomial
time approximation algorithm.~\cite{BiasottiCerriFrosini2011}.
This approach has  been recently applied to the virtual ligand screening problem in computational chemistry~\cite{Keller2018}.
To the best of our knowledge, there is no other previous work in which the problem of computing the matching distance has been considered.

\subparagraph{Our contribution}
We give an algorithm that computes the exact matching distance
between a pair of bipersistence modules in time polynomial
with respect to the size of the input.  We assume that each persistence module is specified by a \emph{presentation}.  Concretely, this means that the module is specified by a matrix encoding the generators and relations, 
with each row and each column labeled by a point in $\R^2$; see \cref{sec:modules}.

To explain our strategy for computing the matching distance, consider the function $F$
that assigns a slice to its weighted bottleneck distance. 
The matching distance is then simply the supremum of $F$,
taken over all slices.  $F$ has a rather complicated structure, since it depends
on the longest edge of a perfect matching in a bipartite graph
whose edges lengths depend on both the slice and the two modules given as input.
When the slice changes, the matching realizing the bottleneck distance
undergoes combinatorial changes, making the function $F$ difficult to treat
analytically.

We show, however, that the space of slices can be divided into 
polynomially many regions so that the restriction of $F$ to each region
takes a simple closed from.  
Perhaps surprisingly, if we parameterize the space of slices by the right half plane $\domain\subset \R^2$, 
the boundary between these regions can be expressed by the union
of polynomially many lines in $\domain$, 
making each region convex and bounded by (possibly unbounded) line segments.  (This is analogous to the observation of \cite{Lesnick2015b} that for a single persistence module, the locus of lines where the combinatorial structure underlying the slice module can change is described by a line arrangement.)  
Moreover, the restriction of $F$ to each cell attains its supremum
at a boundary vertex of the cell, or as the limit of an unbounded line segment in $\Omega$; this follows from a straightforward case analysis.  These observations together lead to a simple
polynomial time algorithm to compute the matching distance.

The characterization of the matching distance underlying our algorithm also makes clear that if the row and column labels of the presentations of the input modules
have rational coordinates, then the matching distance
is rational as well. We are not aware of a simpler argument
for this property.

\subparagraph{Outline}
We introduce the underlying topological concepts in \cref{sec:modules},
and introduce the matching distance in \cref{sec:matching_distance}.
We define the line arrangement subdividing the slice space
in \cref{sec:arrangement}
and give the algorithm to maximize each cell of the arrangement
in \cref{sec:maximization}.
We conclude in \cref{sec:discussion}.

\section{Persistence modules}
\label{sec:modules}

\subparagraph{Single-parameter modules}
Let $\field$ be a fixed finite field throughout. 
A \emph{persistence module} $M$ over $\R$ is an assignment of $\field$-vector spaces 
$M_x$ to real numbers $x$, and linear maps $M_{x\to y}:M_x\to M_{y}$ to a pair of
real numbers $x\leq y$, such that $M_{x\to x}$ is the identity 
and $M_{y\to z}\circ M_{x\to y}=M_{x\to z}$.
Equivalently, in categorical terms, a persistence module is a functor
from $\R$ (considered as a poset category) to the category of $\field$-vector spaces.

A common way to arrive at a persistence module
is to consider a nested sequence 
\[X_1\subseteq X_2\subseteq \ldots \subseteq X_n\]
of simplicial complexes 
and to apply homology with respect to a fixed dimension and base field
$\field$. This yields a sequence
\[H_p(X_1,\field)\to H_p(X_2,\field)\to \ldots \to H_p(X_n,\field)\]
of vector spaces and linear maps. To obtain a persistence module over $\R$, 
we pick \emph{grades} $s_1 < s_2 < \ldots < s_n$
and set $M_x:=0$ if $x<s_1$ and $M_x:=H_p(X_i,\field)$ with $i=\max\{j\mid s_j\leq x\}$ otherwise.
For $y\geq x$ and $M_y=H_p(X_j,\field)$, we define $M_{x\to y}$ as
the map $H_p(X_i,\field)\to H_p(X_j,\field)$ induced by the inclusion map $X_i\to X_j$.

\subparagraph{Finite presentations}
In this work, we restrict our attention to persistence modules 
that are finitely presented in the following sense.
A \emph{finite presentation} is an $\numgenerators\times m$ matrix $\pres$ over $\field$,
where each row and each column is labeled by a number in $\R$, called the \emph{grade}, such that if $\pres_{ij}\ne 0$, then $\grade(\row_i)\leq \grade(\col_j)$; here $\grade(-)$ denotes the grade of a row or column.    We refer to the multiset of grades of all rows and columns of $\pres$ simply as the \emph{set of grades of }$\pres$, and denote this set as $\grade(\pres)$.
A finite presentation gives rise to a persistence module, as we describe next.  The rows of $\pres$ represent the generators of the module, while the columns of $\pres$ encode
relations (or syzygies) on the generators. Concretely, 
let $e_1,\ldots,e_\numgenerators$ denote the standard basis of $\field^\numgenerators$, 
and for $x\in\R$ define the subset
\[
\mathrm{Gen}_x:=\{e_i\mid \grade(\row_i)\leq x\}
\]
Likewise, define
\[
\mathrm{Rel}_x:=\{\col_j\mid \grade(\col_j)\leq x\}
\]
Then, we define
\[
M^\pres_x:=\spann(\mathrm{Gen}_x) / \spann(\mathrm{Rel}_x)
\]
and $M^\pres_{x\to y}$ simply as the map induced by the inclusion map $\spann(\mathrm{Gen}_x)\to\spann(\mathrm{Gen}_y)$.   
It can be checked easily that this indeed defines a persistence module $M^\pres$.  If a persistence module $N$ is isomorphic to $M^\pres$, we say that $\pres$ is a \emph{presentation of $N$.}
We call a persistence module $N$ \emph{finitely presented} if there exists a finite presentation of $N$. For instance, persistence modules as above arising from a finite
simplicial filtration are finitely presented. 
Also, the representation theorem of persistence~\cite{ZomorodianCarlsson2005,Corbet2018}
states that the category of persistence modules over $\field$ is isomorphic to the category of graded $\ring$-modules
with an appropriately chosen ring $\ring$.
With that, a persistence module is finitely presented if and only if the corresponding $\ring$-module
is finitely presented (in the sense of a module).

We assume for concreteness that a finite presentation is encoded 
in terms of a sparse matrix representation~\cite{EdelsbrunnerHarer2010}; the size of the presentation is understood to be the bitsize of this sparse matrix.   
In what follows, for finite a presentation with $\numgenerators$ generators
and $m$ relations, we will express complexity bounds in terms
of $n:=\numgenerators+m$, the number of generators and relations.
Note that an algorithm polynomial in $n$ is also polynomial in the 
size of the presentation.

\subparagraph{Persistence diagrams}
A \emph{persistence diagram} 
is a finite multi-set of points of the form $(b,d)\in \R\times (\R\cup\{\infty\})$ with $b<d$.  
A well-known structure theorem tells us that we can associate to any finitely presented persistence module $M$ a persistence diagram $D(M)$, and that this determines $M$ up to isomorphism \cite{crawley2015decomposition}. 

Given a presentation of $M$, the persistence diagram can be computed by bringing the presentation matrix
into echelon form. This process takes cubic time
in $n$ 
using Gaussian elimination~\cite{EdelsbrunnerHarer2010,ZomorodianCarlsson2005}, 
or $O(n^\omega)$ time using fast matrix multiplication, 
where $\omega\leq 2.373$~\cite{mms-zigzag}.  
\iffullversion
Elimination-based approaches to computing persistent homology perform very well in practice, and are routinely used to study real data.
\fi

\begin{lemma}\label{Lem:Pres_Grades_And_Persistence_Diagram}
For $P$ a finite presentation of a persistence module $M$ and $(b,d)\in D(M)$,
\begin{enumerate}
\item $b$ is the grade of some row of $P$, 
\item if $d<\infty$, then $d$ is the grade of some column of $P$.
\end{enumerate}
\end{lemma}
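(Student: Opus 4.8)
The plan is to read the persistence diagram directly off a suitable reduction of the presentation matrix $\pres$, and to track how the grades of rows and columns survive into the births and deaths. Recall from the structure theory that $D(M)$ can be extracted by column-reducing $\pres$ (say, left-to-right Gaussian elimination) and recording, for each pivot that appears, the pair consisting of the grade of the pivot row and the grade of the pivot column, while each generator (row) not hit by any pivot contributes a point with death $\infty$. So the first step is to fix such a reduction and make this correspondence explicit: every finite point $(b,d)\in D(M)$ arises as $(\grade(\row_i),\grade(\col_j))$ for a pivot in row $i$, column $j$, and every infinite point $(b,\infty)$ arises with $b=\grade(\row_i)$ for some unreduced row $i$. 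Both claims of the lemma then follow immediately from this bookkeeping.

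The one subtlety — and the main obstacle — is that column operations are only legal between columns whose grades are comparable in the right direction: one may add a scalar multiple of $\col_j$ to $\col_{j'}$ only when $\grade(\col_j)\le\grade(\col_{j'})$, since otherwise the result is no longer a valid presentation matrix (it would violate the condition that $\pres_{ij}\ne0 \Rightarrow \grade(\row_i)\le\grade(\col_j)$, and more importantly would not respect the filtration). So I need to argue that a pivot-reduction respecting this ordering constraint still exists and still computes $D(M)$. This is standard in the persistence literature (the "standard algorithm" processes columns in order of nondecreasing grade), so I would cite \cite{ZomorodianCarlsson2005,EdelsbrunnerHarer2010} rather than reprove it; the key point to spell out is that such a reduction never changes the grade label of any column (labels are fixed; only entries change) and never changes the set of row grades, so the recorded births and deaths are honest grades of $\pres$.

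The steps, in order: (1) invoke the ordered column-reduction of $\pres$ and the fact that it yields $D(M)$; (2) observe that a pivot in position $(i,j)$ contributes the point $(\grade(\row_i),\grade(\col_j))$, with $\grade(\row_i)\le\grade(\col_j)$ guaranteeing $b<d$; (3) observe that an unreduced (zero) column contributes nothing, while each generator $e_i$ whose row is not a pivot row contributes $(\grade(\row_i),\infty)$; (4) conclude that for any $(b,d)\in D(M)$, $b$ is a row grade of $\pres$, and if $d<\infty$ then $d$ is a column grade of $\pres$. Steps (2)–(4) are routine; the only place requiring care is the legality of the reduction in step (1), which is why I would lean on the cited structure theorem there.
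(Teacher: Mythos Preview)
Your proposal is correct and follows essentially the same approach as the paper: the paper's proof is a one-liner stating that the result follows from the correctness of the basic matrix reduction algorithm for persistent homology, citing \cite{ZomorodianCarlsson2005}, and you have simply unpacked what that citation entails. The only minor slip is in step (2), where $\grade(\row_i)\le\grade(\col_j)$ guarantees $b\le d$ rather than $b<d$; pairs with equality are simply discarded from the diagram, so this does not affect the conclusion.
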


\begin{proof}
  This follows from the correctness of the basic matrix reduction algorithm for computing persistent homology, as described in \cite{ZomorodianCarlsson2005}.
\end{proof}

\subparagraph{Bottleneck distance}
Consider two persistence diagrams $D_1$ and $D_2$ and a bijection $\sigma:D'_1\to D'_2$ for $D'_1\subseteq D_1$ and $D'_2\subseteq D_2$.  For $\delta>0$, we define
$\cost(\sigma):=\max(A,B)$, where 
\begin{align*}
A&=\max\, \{\max(|a-c|,|b-d|) \mid (a,b)\in D_1',\ \sigma(a,b)=(c,d)\},\\
B&=\max\, \{(b-a)/2\mid (a,b)\in (D_1\setminus D'_1)\cup (D_2\setminus D'_2)\},
\end{align*}
and it is understood that $\infty-\infty=0$.  
We define the bottleneck distance $d_B$ by 
\[d_B(D_1,D_2)=\min \{\epsilon\mid \textup{there exists a matching of cost $\epsilon$ between $D_1$ and $D_2$}\}.\]
For persistence modules $M$ and $N$, we write $d_B(D(M),D(N))$ simply as $d_B(M,N)$.

\begin{lemma}
\label{lem:bottelneck_structure}
Let $\pres^M$ and $\pres^N$ be finite presentations of persistence modules $M$ and $N$, respectively.  $d_B(M,N)$ is equal to one of the following:
\begin{enumerate}
\item The difference of a grade of $\pres^M$ and a grade of $\pres^N$,
\item half the difference of two grades in $\pres^M$,
\item half the difference of two grades in $\pres^N$.
\end{enumerate}
\end{lemma}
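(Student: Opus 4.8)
The plan is to exhibit an optimal matching and then read off the value from \cref{Lem:Pres_Grades_And_Persistence_Diagram}. Since $D(M)$ and $D(N)$ are finite multisets, there are only finitely many matchings between them, so the minimum defining $d_B(M,N)$ is attained: fix a matching $\sigma\colon D_1'\to D_2'$ with $\cost(\sigma)=d_B(M,N)$, where $D_1=D(M)$ and $D_2=D(N)$. We may assume $d_B(M,N)<\infty$, since otherwise there is nothing to prove. Finiteness forces every diagram point with infinite death coordinate to be matched to another such point, contributing $0$ to $\cost(\sigma)$ via the convention $\infty-\infty=0$; hence any coordinate that actually realizes the value $\cost(\sigma)$ is finite. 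Note also that birth coordinates are always finite, so only death coordinates require this remark.

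Next I would write $\cost(\sigma)=\max(A,B)$ as in the definition and branch on which term attains the maximum. If $\cost(\sigma)=A$, it is realized by a matched pair $(a,b)\in D_1'$ with $\sigma(a,b)=(c,d)$, and $d_B(M,N)$ equals either $|a-c|$ or $|b-d|$. In the first subcase, \cref{Lem:Pres_Grades_And_Persistence_Diagram} tells us that $a$ is a grade of a row of $\pres^M$ and $c$ is a grade of a row of $\pres^N$, so $d_B(M,N)$ is the difference of a grade of $\pres^M$ and a grade of $\pres^N$ (item~1). In the second subcase, $|b-d|=d_B(M,N)<\infty$ forces $b,d<\infty$, and then \cref{Lem:Pres_Grades_And_Persistence_Diagram} makes $b$ a grade of a column of $\pres^M$ and $d$ a grade of a column of $\pres^N$, again yielding item~1.

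If instead $\cost(\sigma)=B$, it is realized by an unmatched point $(a,b)\in(D_1\setminus D_1')\cup(D_2\setminus D_2')$ with $d_B(M,N)=(b-a)/2$; finiteness of this value gives $b<\infty$. When $(a,b)\in D(M)$, \cref{Lem:Pres_Grades_And_Persistence_Diagram} makes $a$ a grade of a row and $b$ a grade of a column of $\pres^M$, so $d_B(M,N)$ is half the difference of two grades of $\pres^M$ (item~2); the case $(a,b)\in D(N)$ is symmetric and gives item~3. These cases are exhaustive. The only real subtlety is the $\infty$-bookkeeping: before invoking \cref{Lem:Pres_Grades_And_Persistence_Diagram} on a death coordinate one must know that coordinate is finite, which is exactly what the assumed finiteness of $d_B(M,N)$ guarantees; everything else is a routine case split.
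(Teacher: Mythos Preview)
Your argument is correct and is exactly the paper's proof written out in full: the paper says only ``This follows immediately from \cref{Lem:Pres_Grades_And_Persistence_Diagram} and the definition of $d_B$,'' and your case split on whether $\cost(\sigma)=A$ or $B$, together with the $\infty$-bookkeeping, is precisely how one unpacks that sentence. The only quibble is your clause ``otherwise there is nothing to prove'': if $d_B(M,N)=\infty$ the conclusion of the lemma actually fails as literally stated (all three listed quantities are finite reals), but the paper glosses over this edge case as well, so it is a defect of the lemma's statement rather than of your argument.
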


\begin{proof}
This follows immediately from Lemma \ref{Lem:Pres_Grades_And_Persistence_Diagram} and the definition of $d_B$.
\end{proof}

Given two finite persistence diagrams $D$, $D'$
with $m$ points each,
we can compute $d_B(D,D')$ in time 
to
 $O(m^{1.5}\log m)$~\cite{eik-geometry}; 
see~\cite{kmn-geometry} for details, 
including a report on practical efficiency.
The number of points in a diagram
is at most the number of generators of the corresponding module,
and hence upper-bounded by $n$.
Thus, the complexity of computing the bottleneck distance
of two persistence modules is dominated by the computation
of the persistence diagrams, and has worst-case complexity $O(n^\omega)$.

\subparagraph{Bipersistence modules}
The definitions of persistence modules and presentations extend to higher
dimensions without difficulty.  In the 2-parameter setting, this goes as follows: 
Define a partial
order~$\leq$ on $\R^2$ by $p\leq q$ if $p_x\leq q_x$ and $p_y\leq q_y$.  A
\emph{bipersistence module} is an assignment of $\field$-vector spaces 
$M_p$ to points $p\in \R^2$, and linear maps $M_{p\to q}:M_p\to M_{q}$ to pairs of
points $p\leq q\in \R^2$, such that $M_{p\to p}$ is the identity 
and $M_{q\to r}\circ M_{p\to q}=M_{p\to r}$ whenever $p\leq
q\leq r$.
In topological data analysis, $2$-dimensional persistence
modules typically arise by applying homology to a bifiltered 
simplicial complex 

A \emph{finite presentation} of a bipersistence module $\R^2$ is defined in the same way as for one-parameter persistence modules, except
that the labels of each row/column are elements of $\R^2$, and the $\leq$ relation appearing in the definition means the partial order over $\R^2$.
From now on, we will assume that all bipersistence modules considered are finitely presented.

\iffullversion
\begin{example*}
Let $M$ be the bipersistence module given by \[M_{p}=\begin{cases}\field &\textup{ if $p\in [0,1)\times [0,1)$}\\ 0 &\textup{ otherwise,}\end{cases}\qquad M_{p\to q}=\begin{cases}\mathrm{Id}_\field &\textup{ if $p,q\in [0,1)\times [0,1)$}\\ 0 &\textup{otherwise.}\end{cases}.\]
Then a presentation of $M$ is given by
\[
\begin{blockarray}{ccc}
 & (1,0) & (0,1)  \\
\begin{block}{c[cc]}
  (0,0) & 1 & 1 \\
\end{block}
\end{blockarray}. \]
\end{example*}
\fi

In topological data analysis, we do not typically have immediate access to a presentation of a bipersistence module $M$, but rather to a chain complex of bipersistence modules for which $M$ is a homology module.  However, it has recently been observed that from such a chain complex, a (minimal) presentation of $M$ can be computed in cubic time~\cite{lesnick2018computing}.
The algorithm for this is practical, and has been implemented in the software package RIVET~\cite{RIVET}.

\section{The matching distance}
\label{sec:matching_distance}

\subparagraph{Slices}
We define a \emph{slice} as a line $\ell:y=sx+t$ where $s$ and $t$ are
real numbers with $s>0$.  Let $\lambda:\R\to\ell$ be an order-preserving, isometric parameterization of the slice.  Concretely, such a parameterization is given by
$\lambda(x) = \frac{1}{\sqrt{1+s^2}}(x,sx+t)$.
Given a bipersistence module $M$ and slice $\ell$, we 
define a (1-parameter) persistence module $M^\ell$ via $M^\ell_x:=M_{\lambda(x)}$, with its linear maps induced by $M$.  We call $M^\ell$ a \emph{slice module}.
\subparagraph{Matching distance}
For a slice $\ell:y=sx+t$, we define a weight
\[
w(\ell):=\begin{cases}
\frac{1}{\sqrt{1+s^2}} & s\geq 1\\
\frac{1}{\sqrt{1+\frac{1}{s^2}}} & 0<s< 1
\end{cases}
\]
$w(\ell)$ is maximized for slices with slope $1$,
and gets smaller when the slope goes to $0$ or to $\infty$.

Let $\Lambda$ denote the set of all slices.  
Writing $\domain:= (0,\infty)\times \R$, the map
$\alpha:\domain\to \Lambda$ that sends $(s,t)$ to the line $y=sx+t$
is clearly a bijective parameterization of $\Lambda$.
We equip $\Lambda$ with the topology induced by $\alpha$, 
using the subset topology $\Omega\subset\R^2$.

For two persistence modules $M$, $N$ over $\R^2$, 
we define a function $F^{M,N}:\Lambda\to [0,\infty)$ by
\[F^{M,N}(\ell):=w(\ell)\cdot d_B(M^\ell,N^\ell).\]
and we define the \emph{matching distance} between $M$ and $N$ as
 $\dmatch(M,N):=\sup F^{M,N}$.
As noted in the introduction, the weights $w(\ell)$ are chosen to ensure that $\dmatch$ is a lower bound for the interleaving distance. 

\begin{lemma}
\label{lem:F_cont}
Given two bipersistence modules $M$, $N$, the map $F^{M,N}$ is continuous.
\end{lemma}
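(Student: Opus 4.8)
The plan is to factor $F^{M,N}$ through the parameterization $\alpha:\domain\to\Lambda$ and show that the resulting function $(s,t)\mapsto w(\ell)\cdot d_B(M^\ell,N^\ell)$ is continuous on $\domain$. Since $w$ is manifestly continuous (it is built from $\sqrt{1+s^2}$ and $\sqrt{1+1/s^2}$, which are continuous and nonzero on $(0,\infty)$), it suffices to prove that $(s,t)\mapsto d_B(M^\ell,N^\ell)$ is continuous. I would prove this via a two-step triangle-inequality argument: first fix a slice $\ell_0=\alpha(s_0,t_0)$, and for a nearby slice $\ell=\alpha(s,t)$ compare $d_B(M^\ell,N^\ell)$ with $d_B(M^{\ell_0},N^{\ell_0})$ using the triangle inequality for $d_B$, which reduces the task to bounding $d_B(M^\ell,M^{\ell_0})$ and $d_B(N^\ell,N^{\ell_0})$.

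For the main step, I would use the standard stability machinery for 1-parameter persistence together with the Lipschitz-type control one has on slice modules. Concretely, for a fixed bipersistence module $M$, one shows that as $(s,t)\to(s_0,t_0)$, the slice modules $M^\ell$ converge to $M^{\ell_0}$ in the interleaving (equivalently bottleneck) distance. The key geometric input is that the isometric parameterizations $\lambda_{\ell}$ and $\lambda_{\ell_0}$ of two nearby slices can be compared: for any compact range of the parameter, $\|\lambda_\ell(x)-\lambda_{\ell_0}(x)\|$ is small when $(s,t)$ is close to $(s_0,t_0)$, and moreover one slice is sandwiched between two uniform translates of the other within any bounded window. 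Because $M$ is finitely presented, all the ``action'' (generators and relations) happens in a bounded region of $\R^2$, so one only needs this comparison on a compact window; outside that window both slice modules are determined in a trivial, locally constant way. This sandwiching yields an $\varepsilon$-interleaving between $M^\ell$ and $M^{\ell_0}$ with $\varepsilon\to 0$, hence $d_B(M^\ell,M^{\ell_0})\to 0$ by the isometry theorem for 1-parameter modules, and likewise for $N$.

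Putting the pieces together: given $\varepsilon>0$, choose a neighborhood of $(s_0,t_0)$ small enough that $d_B(M^\ell,M^{\ell_0})<\varepsilon$ and $d_B(N^\ell,N^{\ell_0})<\varepsilon$; then two applications of the triangle inequality give $|d_B(M^\ell,N^\ell)-d_B(M^{\ell_0},N^{\ell_0})|<2\varepsilon$, and multiplying by the (locally bounded, continuous) weight $w$ finishes the argument. The main obstacle is the bookkeeping in the sandwiching step: one must carefully track how the slope change affects the parameterization so as to produce genuine translates in the $\le$ order on $\R^2$ (not merely Euclidean-close points), and handle the degenerate behavior as $s\to 0$ or $s\to\infty$ — but since $\domain$ excludes those limits, on any point of $\domain$ one stays in a regime where the slope is bounded away from $0$ and $\infty$ on a small neighborhood, so this is manageable. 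An alternative, more hands-on route would be to invoke Lemma~\ref{lem:bottelneck_structure}: off the (lower-dimensional) locus where the discrete combinatorial type of the matching changes, $d_B$ is given by an explicit formula in the grades of the pushed presentations, which depend continuously — indeed piecewise-linearly after a coordinate change — on $(s,t)$; continuity across that locus then follows because $d_B$ is a minimum over finitely many such continuous candidate values. Either route works; I would present the interleaving argument as the conceptually cleaner one.
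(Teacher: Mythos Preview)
Your approach is essentially the same as the paper's: both reduce to continuity of $\ell\mapsto d_B(M^\ell,N^\ell)$ via continuity of $w$, then use the triangle inequality for $d_B$ to reduce further to continuity of $\ell\mapsto D(M^\ell)$ in the bottleneck metric. The paper simply cites \cite[Theorem~2]{Landi2018} for this last step, whereas you sketch a direct interleaving argument; your sketch is sound in outline, and the ``bounded window'' caveat you flag is exactly the point that needs care (and is handled in the cited reference).
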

\begin{proof}
$w$ is clearly continuous, so it suffices to show that the function $\ell\mapsto d_B(M^\ell,N^\ell)$ is continuous.
Let $\mathcal{D}$ denote the metric space of all finite persistence diagrams, with metric the bottleneck distance.   It follows from \cite[Theorem 2]{Landi2018} that the map sending a slice $\ell$ to the persistence diagram $D(M^\ell)$ is continuous with respect to the topology on $\mathcal{D}$.  Thus the map sending $\ell$ to the pair $(D(M^\ell),D(N^\ell))$ is also continuous.  Moreover, the bottleneck distance is clearly continuous as a map $\mathcal{D}\times \mathcal{D}\to [0,\infty)$, thanks to the triangle inequality.  
Thus, $\ell\mapsto d_B(M^\ell,N^\ell)$ is a composition
of continuous functions, hence continuous.
\end{proof}

\section{The arrangement}
\label{sec:arrangement}

In what follows, we fix two bipersistence modules $M$, $N$ 
(each with at most $n$ generators and relations)
and write $F:=F^{M,N}$.
Using the parameterization $\alpha$ from above, we also have a function
$F\circ\alpha:\Omega\to [0,\infty)$; slightly abusing notation,
we will also denote this map by $F$.

In this section, we construct a line arrangement in $\domain$ in such a way that it is simple to compute $\sup F$ on each face.  Recall that a \emph{line arrangement} of $\domain$ is the subdivision of $\domain$ into vertices, edges,
and faces induced by a finite set of distinct lines $L_1,\ldots,L_n$. 
The vertices of the arrangement are the intersection points of (at least) two lines,
the edges are maximal connected subsets of lines not containing any vertex,
and the faces are the connected components of $\domain\setminus\bigcup_{i=1}^n L_i$.
Clearly, each vertex, edge, and face of the arrangement is a convex set.
The boundary of each face consists of a finite number of edges and vertices.

\subparagraph{A first line arrangement}
For $v\in \R^2$, let $L_v$ denote the line $y = -v_x\, x + v_y$.  Note that $L_v\cap\domain$ is exactly the set of parameterizations of slices containing $v$.
Now, fix presentations $\pres^M$ and $\pres^N$ of $M$ and $N$.  Let $\mathcal{A}_0$ denote the arrangement in $\domain$ induced by the set of lines 
\[ \{L_v \mid v\in \grade(\pres^M) \cup \grade(\pres^N)\}.\]

In what follows, we will refine $\mathcal{A}_0$ by adding more lines into the arrangement.  For this we first need to introduce some definitions.

\subparagraph{Pushes}
For a point $p=(p_x,p_y)\in\R^2$ and a slice
$\ell:y=sx+t$, we define the \emph{push of $p$ onto $\ell$} as
\[\push(p,\ell):=\begin{cases} (p_x,s\cdot p_x + t) & \text{if $p$ lies below $\ell$}\\
(\frac{p_y-t}{s},p_y) & \text{if $p$ lies on or above $\ell$}\end{cases}
\]
Geometrically, $\push(p,\ell)$ gives the intersection point of $\ell$ and a vertical upward ray emanating
from $p$ in the first case, and the intersection of $\ell$ with a horizontal right ray emanating
from $p$ in the second case. See Figure~\ref{fig:pushes} for an illustration.

\begin{figure}

\centering
\includegraphics[width=5cm]{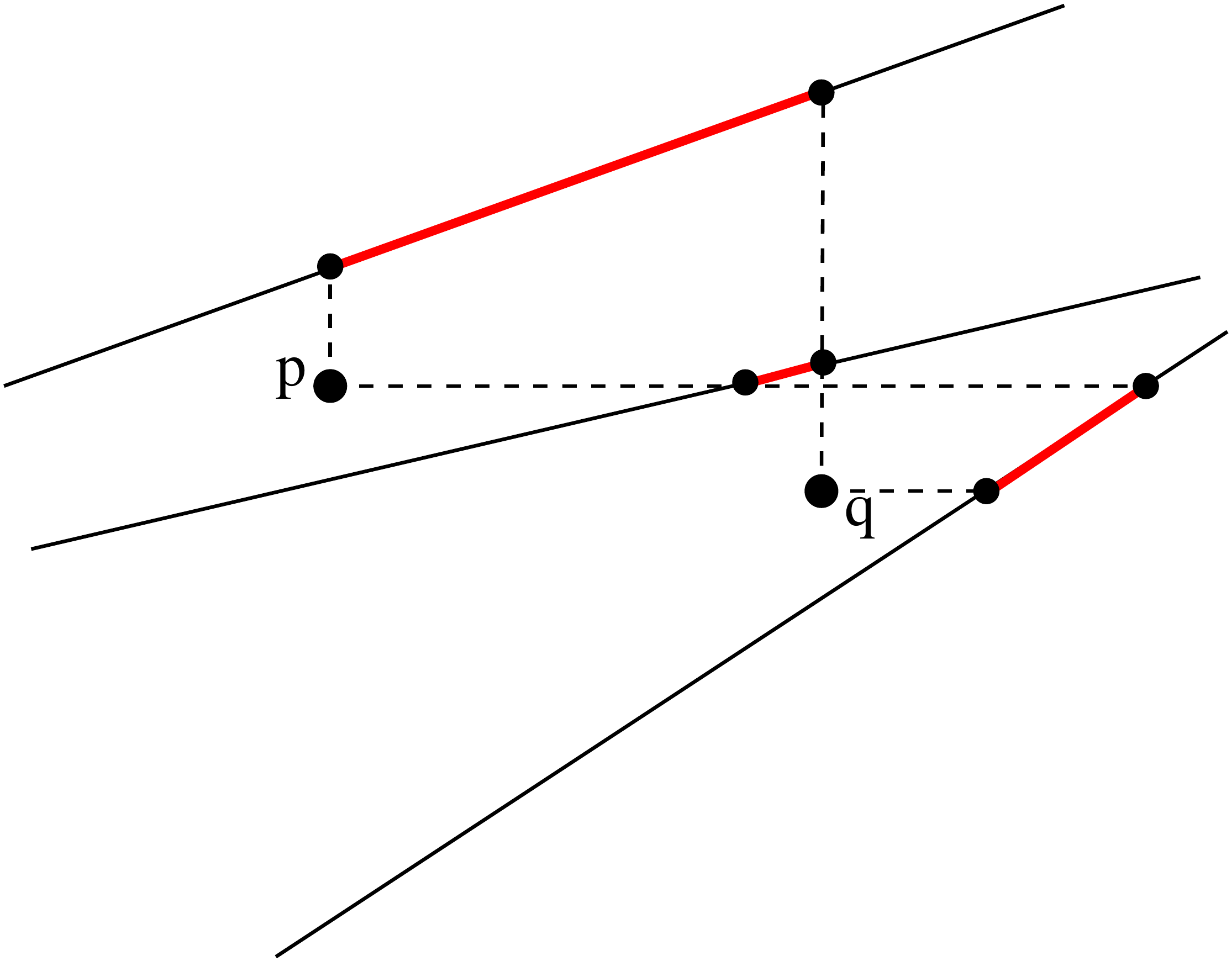}
\caption{The pushes of two points $p$ and $q$ to three different slices.
The length of the thick (red) line corresponds to the $\delta_{p,q}$ value
of the corresponding slice.}
\label{fig:pushes}
\end{figure}

\begin{remark}\label{Rem:Push_Pres}
A finite presentation of $M$ induces a finite presentation of $M^\ell$ with the same underlying matrix, and each row or column grade $p\in \R^2$ replaced with $\lambda^{-1}(\push(p,\ell))$.  
Clearly, this presentation can be obtained in 
time linear in the size of the presentation of~$M$.
\end{remark}

For $p,q\in \R^2$, define $\delta_{p,q}:\domain\to [0,\infty)$ by
\[
\delta_{p,q}(s,t):=\|\push(p,\ell)-\push(q,\ell)\|_2
= |\lambda^{-1}\circ\push(p,\ell) - \lambda^{-1}\circ\push(q,\ell)|
\]
with $\ell$ the slice defined by $s$ and $t$. 
Again, see Figure~\ref{fig:pushes} for an illustration.

We now give piecewise analytic formulae for $\delta_{p,q}$, which depend on whether the slice $\ell$ is below or above $p$ and $q$.

\begin{description}
\item [(I) slice is below both $p$ and $q$:]
\[\delta_{p,q}(s,t)=\|(\frac{p_y-t}{s},p_y)-(\frac{q_y-t}{s},q_y)\|_2
=\sqrt{\left(\frac{p_y-q_y}{s}\right)^2 + (p_y-q_y)^2}=|p_y-q_y|\sqrt{1+\frac{1}{s^2}}.
\]

\item [(II) slice is above both $p$ and $q$:]
\[\delta_{p,q}(s,t)=\|(p_x,s p_x+t)-(q_x,sq_x+t)\|_2
= \sqrt{(p_x-q_x)^2 + (sp_x-sq_x)^2} 
= |p_x-q_x|\sqrt{1+s^2}.
\]

\item [(III) slice is between $p$ and $q$:]
There are two subcases, which we will call (IIIa) and (IIIb): Assuming $p$ lies above the slice (IIIa), the formula is
\begin{align*}\delta_{p,q}(s,t)&=\|(\frac{p_y-t}{s},p_y)-(q_x,sq_x+t)\|_2
=\sqrt{\left(\frac{p_y-t}{s}-q_x\right)^2 + (p_y-sq_x-t)^2}\\
&=\sqrt{\frac{1}{s^2}\left(p_y-t-sq_x\right)^2 + (p_y-sq_x-t)^2}
=  |p_y-t-sq_x|\sqrt{1+\frac{1}{s^2}}.
\end{align*}
If $p$ lies below the slice (IIIb), the formula is the same, with the roles of $p$ and $q$ exchanged.
\end{description}

The push map is easily seen to be continuous with respect to the slice $\ell$, so these formulae also extend to boundaries of the cases, i.e., when the slice contains $p$ or $q$.

\begin{lemma}
\label{lem:delta_invariance}
If $p,q\in\grade(\pres^M)\cup \grade(\pres^N)$, then in each face of $\mathcal{A}_0$, exactly one of the conditions \textup{(I), (II), (IIIa), (IIIb)} holds everywhere.  
Hence, $\delta_{p,q}$ can be expressed on the entire face by one of the analytic formulae above.
\end{lemma}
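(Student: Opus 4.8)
The plan is to reduce the statement to one elementary fact: for a point $v\in\R^2$, whether a given slice lies above or below $v$ is determined entirely by which side of the line $L_v$ the slice's parameter $(s,t)$ lies on, and, crucially, both $L_p$ and $L_q$ already belong to the arrangement $\mathcal{A}_0$ by the hypothesis $p,q\in\grade(\pres^M)\cup\grade(\pres^N)$.

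First I would make the half-plane correspondence precise. For $\ell:y=sx+t$, the point $v=(v_x,v_y)$ lies strictly below $\ell$ exactly when $v_y<sv_x+t$, whereas $(s,t)$ lies on $L_v$ exactly when $t=-v_xs+v_y$, i.e.\ when $v_y=sv_x+t$. Hence the open half-plane $\{(s,t)\in\domain : t>-v_xs+v_y\}$ consists precisely of the parameters of slices lying strictly above $v$, the complementary open half-plane consists of the parameters of slices lying strictly below $v$, and $L_v\cap\domain$ is the set of parameters of slices passing through $v$. In particular, on $\domain\setminus L_v$ the proposition ``the slice lies above $v$'' has a well-defined truth value that is locally constant.

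Next I would apply this with $v=p$ and with $v=q$. Since $L_p$ and $L_q$ are among the lines defining $\mathcal{A}_0$, and a face $f$ of $\mathcal{A}_0$ is by definition a connected component of $\domain$ minus the union of the arrangement lines, $f$ is connected and disjoint from both $L_p$ and $L_q$. Therefore $f$ lies in a single open half-plane of $L_p$ and in a single open half-plane of $L_q$; consequently no slice parameterized by a point of $f$ passes through $p$ or through $q$, and the pair of Boolean values (is the slice above $p$?, is the slice above $q$?) is constant over all of $f$.

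Finally I would match these two Booleans to the four cases: slice below both $p$ and $q$ is case (I); slice above both is (II); $p$ above and $q$ below the slice is (IIIa); $p$ below and $q$ above is (IIIb). Because on $f$ the slice meets neither $p$ nor $q$, these four possibilities are both mutually exclusive and exhaustive, so exactly one of them holds, and it is the same one at every point of $f$; the corresponding closed-form expression for $\delta_{p,q}$ from the preceding case analysis then applies throughout $f$. I do not anticipate any real difficulty beyond keeping the orientation of the inequalities straight in the half-plane correspondence — the entire content of the lemma is that the combinatorial data selecting the formula can change only across $L_p$ or $L_q$, both of which are already present in $\mathcal{A}_0$.
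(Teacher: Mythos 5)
Your proof is correct and uses essentially the same idea as the paper's: the sign of the affine form $t - (-v_x s + v_y)$ determines whether the slice lies above or below $v$, and since $L_p$ and $L_q$ are among the defining lines of $\mathcal{A}_0$, each face lies entirely on one side of each, so the pair (above/below $p$, above/below $q$) is constant on the face. The paper phrases this as a proof by contradiction via convexity and the intermediate value theorem (a side-switch along a segment inside a face would force the segment to cross $L_p$ or $L_q$); you argue directly from connectedness of a face and the fact that $\domain\setminus L_v$ has two components. These are two renderings of the same observation, and yours is arguably a touch cleaner since it avoids invoking convexity explicitly. You also handle the boundary case carefully by noting that a face meets neither $L_p$ nor $L_q$, so the four conditions partition the face; that point is implicit in the paper's proof.
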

\begin{proof}
Assume for a contradiction that two points $x$, $x'$ in a face $f$
of $\mathcal{A}_0$ are of two different types among 
\textup{(I), (II), (IIIa), (IIIb)}. Let $\ell=\alpha(x)$, $\ell'=\alpha(x')$ denote the corresponding slices. Then either $p$ or $q$ (or both) switch
sides from $\ell$ to $\ell'$. Assume that $p$ lies above $\ell$
and $p$ lies below $\ell'$~-- all other cases are analogous. Then, on the
line segment from $x$ to $x'$, there is some point $x''$ such that $p$
lies on the slice $\alpha(x'')$. Hence, $x''$ lies on $L_p$ and is therefore
not in $f$. This contradicts the convexity of the faces.
\end{proof}

In view of Lemma~\ref{lem:delta_invariance}, for $p,q\in\grade(\pres^M)\cup \grade(\pres^N)$ we may define the \emph{type} of $\delta_{p,q}$ on a face of $\mathcal{A}_0$ to be the case $\textup{(I), (II), (IIIa), or (IIIb)}$ which holds on that face.

\subparagraph{Refinement of the arrangement}
Now we further subdivide the arrangement~$\mathcal{A}_0$. 
For that, consider the set of equations of the form
\begin{equation}\label{eq:delta_sol}
 \begin{gathered}
\begin{array}{rcl}
\delta_{p,q}(s,t)&=&0 \qquad \qquad\qquad\ \ \ \ \, \textup{for $p,q\in \grade(\pres^M)$ or $p,q\in \grade(\pres^N)$}, \\[0.5em]
\onehalforone_{pq}\delta_{p,q}(s,t)&=&\onehalforone_{p'q'}\delta_{p',q'}(s,t)\qquad \textup{for $p,q,p',q'\in \grade(\pres^M)\sqcup \grade(\pres^N)$},
\end{array}
\end{gathered}
\end{equation}
where
\[\onehalforone_{pq}:=\begin{cases} \frac{1}{2}&\textup{if $p,q\in \grade(\pres^M)$ or $p,q\in \grade(\pres^N)$},\\ 1 &\textup{otherwise}.\end{cases}\]

\begin{lemma}
\label{lem:solution_in_face}
The solution set of each of the above equations restricted to a face $\face$ is
 either the empty set, the entire face, the intersection of $\face$ with a line, or 
intersection of $\face$ with the union of two lines.
\end{lemma}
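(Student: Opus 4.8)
The plan is to prove the statement by a case analysis on the types of $\delta_{p,q}$ and $\delta_{p',q'}$ on the face $\face$, which are well-defined by Lemma~\ref{lem:delta_invariance}. On $\face$ each $\delta$ is one of the four analytic expressions (I), (II), (IIIa), (IIIb); note that all four have the form $g(s,t)\sqrt{1+s^2}$ or $g(s,t)\sqrt{1+1/s^2}$, where $g$ is an \emph{affine} function of $(s,t)$ — indeed in cases (I) and (II) it is a constant ($|p_y-q_y|$ or $|p_x-q_x|$) and in cases (IIIa)/(IIIb) it is $\pm(p_y - t - s q_x)$ (resp.\ with $p,q$ swapped), which is affine in $(s,t)$. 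Moreover, on a single face the sign of this affine expression does not change (the locus where $p_y - t - sq_x = 0$ is exactly the line $L_{(q_x,p_y)}$ through the grade-pushes, and in fact on a face of type (IIIa) the point $p$ lies strictly above the slice, so $p_y > s p_x + t \geq s q_x + t$ when $p_x \geq q_x$, forcing the sign; the relevant orderings are constant on $\face$). So each $\delta_{p,q}$ equals $|h(s,t)|\cdot r(s)$ where $h$ is affine with constant sign on $\face$, hence equals $\pm h(s,t)\cdot r(s)$ for a fixed sign, with $r(s)\in\{\sqrt{1+s^2},\sqrt{1+1/s^2}\}$.

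First I would handle the equations $\delta_{p,q}(s,t)=0$. Since $r(s)>0$ for all $s\in(0,\infty)$, this equation is equivalent on $\face$ to $h(s,t)=0$, an affine equation; its solution set in $\face$ is therefore $\emptyset$, all of $\face$, or the intersection of $\face$ with a line, which is one of the asserted possibilities. Next I would handle $\onehalforone_{pq}\delta_{p,q}=\onehalforone_{p'q'}\delta_{p',q'}$. Write this as $\onehalforone_{pq}\,|h(s,t)|\,r(s) = \onehalforone_{p'q'}\,|h'(s,t)|\,r'(s)$ on $\face$, with fixed signs. If the two radical factors $r(s)$ and $r'(s)$ are equal (which happens precisely when the two $\delta$'s have types drawn from the ``same side'' family — e.g.\ both among (I),(IIIa) giving $\sqrt{1+1/s^2}$, or both among (II),(IIIb) giving $\sqrt{1+s^2}$), then $r(s)=r'(s)>0$ cancels and we are left with an affine equation $\onehalforone_{pq} h(s,t) = \pm \onehalforone_{p'q'} h'(s,t)$ (the $\pm$ absorbing the constant signs), whose solution set in $\face$ is again $\emptyset$, all of $\face$, or $\face$ intersected with a line. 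If instead one radical is $\sqrt{1+s^2}$ and the other $\sqrt{1+1/s^2}=\frac{1}{s}\sqrt{1+s^2}$ (recall $s>0$ on $\domain$), then after multiplying through by $s>0$ and cancelling the common factor $\sqrt{1+s^2}>0$ we again reduce to an affine equation in $(s,t)$ — wait, not quite: one side picks up a factor of $s$, so we get $\onehalforone_{pq}\,s\,h = \pm\onehalforone_{p'q'}\,h'$ (or the mirror image), which is a polynomial equation of degree at most $2$ in $(s,t)$ but in fact is affine in $t$ and involves products $s\cdot t$; this is a quadratic whose zero set is a conic. This is the only case that can genuinely produce two lines: the conic is a product of two linear forms. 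To see that it really factors into (at most) two lines rather than, say, an ellipse, I would note that both $h$ and $h'$ are affine and at least one of them (the one of type (I) or (II)) has \emph{no} $s$ or $t$ dependence in the relevant variable — concretely one of the two expressions $sh$, $h'$ contains the monomial $st$ and the other does not, so the quadratic form part of the equation is a single term $c\,st$, whose associated conic is a pair of lines $\{s = \text{const}\}\cup\{\text{affine in }t\}$ or similar. More carefully, one checks directly in each type-pair that the degree-$2$ part is a scalar multiple of $st$ (or of $s^2$), forcing the zero locus, when nonempty and not all of $\face$, to be a union of at most two lines.

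The main obstacle I anticipate is the bookkeeping in this last ``mixed radical'' case: one must verify, across the several combinations of types (I)/(II)/(IIIa)/(IIIb) for the pair $(\delta_{p,q},\delta_{p',q'})$ with differing radicals, that after clearing denominators the resulting degree-$\le 2$ polynomial in $(s,t)$ is always reducible over $\R$ into linear factors, so that its vanishing locus is a union of at most two lines. The cleanest way to organize this is: observe that $s\cdot(\text{type (IIIa) expression}) = s\,(p_y - t - sq_x) = s p_y - st - s^2 q_x$ and $s\cdot(\text{type (I) expression}) = s|p_y-q_y|$; in every case the quadratic is of the shape $A s^2 + B st + C s + (\text{affine in }s,t)$ with at most one of $A$, $B$ nonzero after the subtraction (because the two sides share the offending monomial or one side lacks it entirely), and a quadratic in two variables whose degree-$2$ part is a single monomial $s^2$ or $st$ always defines a union of two lines (or a double line, or is empty, or is everything). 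Intersecting with the convex face $\face$ only shrinks these possibilities, never enlarges them, giving exactly the four asserted outcomes. $\qed$
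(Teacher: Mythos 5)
Your decomposition $\delta_{p,q}=|h(s,t)|\cdot r(s)$ with $h$ affine and $r(s)\in\{\sqrt{1+s^2},\sqrt{1+1/s^2}\}$ is a clean way to organize the argument, but there is a genuine gap in the equal-radical case. You assert that the affine expression $h=p_y-t-sq_x$ ``has constant sign on $\face$,'' so that $|h|$ can be replaced by $\pm h$ with a fixed sign, collapsing each equation of the form $c_{pq}|h|=c_{p'q'}|h'|$ to a \emph{single} affine equation and hence at most one line. This sign-constancy claim is false for faces of $\mathcal{A}_0$. The locus $h=0$ is the line $L_{(q_x,p_y)}$, and the point $(q_x,p_y)$ is generally not a grade of $\pres^M$ or $\pres^N$, so this line need not be in $\mathcal{A}_0$ and can cut through a face. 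Concretely, take $p=(0,2)$, $q=(5,1)$ as the only grades; the slices $(s,t)=(1,0)$ and $(s,t)=(0.1,1.4)$ both lie strictly between $L_p\colon t=2$ and $L_q\colon t=-5s+1$ (which do not intersect in $\Omega$), yet the sign of $p_y-t-sq_x$ is negative at the first and positive at the second. Your geometric justification ($p_y>sp_x+t\geq sq_x+t$ when $p_x\geq q_x$) only works under an extra hypothesis $p_x\geq q_x$ that is not part of the definition of type (IIIa). The consequence is not merely cosmetic: in the case where both $\delta$'s are of type (III), the correct solution set of $|p_y-t-sq_x|=|p'_y-t-sq'_x|$ is generically the union of \emph{two} lines (coming from $h=+h'$ and $h=-h'$), and the paper needs both in $\mathcal{A}$ to guarantee that the $\delta$-orderings are constant on its faces (which is used in Theorem~\ref{thm:delta_theorem}). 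The fix is simply to keep the absolute values and use $|a|=|b|\Leftrightarrow a=\pm b$, exactly as the paper does; this yields at most two lines in every subcase without any sign assumption.

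Two smaller points. First, your partition of the types into radical families is off: type (IIIb), like (I) and (IIIa), carries the radical $\sqrt{1+1/s^2}$; only type (II) carries $\sqrt{1+s^2}$. So the ``mixed radical'' case occurs precisely when exactly one of $\delta_{p,q},\delta_{p',q'}$ is of type (II). Second, your worry about a genuine degree-$2$ equation in the mixed case never materializes, because after the identity $\sqrt{1+1/s^2}=\tfrac{1}{s}\sqrt{1+s^2}$ the extra factor of $s$ always lands on the type (II) side, whose $h'$ is a \emph{constant}; thus $s|h'|$ is linear and the resulting equation splits (again via $a=\pm b$) into at most two lines. Your factoring argument for the hypothetical quadratic also overstates matters (e.g.\ $s\cdot(p_y-t-sq_x)$ has \emph{both} an $st$ and an $s^2$ monomial), but this is moot since that branch is never reached. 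The paper's proof avoids all of this by a direct six-way case analysis on the type pair, keeping the absolute values throughout.
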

\begin{proof}
First we show the statement for equations of type $\delta_{p,q}(s,t)=0$.
There are $3$ cases:
\begin{description}
  \item[$\delta_{p,q}$ is of type (I):]
the equation becomes
\[|p_y-q_y|\sqrt{1+\frac{1}{s^2}}=0\]
for which either all $(s,t)\in\face$ are a solution (if $p_y=q_y$),
or no $(s,t)$ is a solution.
\item[$\delta_{p,q}$ is of type (II):]
 the same argument holds for the equation
\[|p_x-q_x|\sqrt{1+s^2}=0.\]
\item[$\delta_{p,q}$ is of type (III):]
Swapping $p$ and $q$ if necessary, we obtain the equation
\[|p_y-t-sq_x|\sqrt{1+\frac{1}{s^2}}=0\]
and the solution set is made of all $(s,t)\in\face$ for which $p_y-t-sq_x=0$, 
which is the equation of a line. 
\end{description}

For the remaining equations, we give the proof in the special case that $\onehalforone_{pq}=\onehalforone_{p'q'}$; the proof in the other cases is essentially the same.
For equations of the form $\delta_{p,q}(s,t)=\delta_{p',q'}(s,t)$,
there are six cases to check, depending on the type of the $\delta$-functions on the left and right sides of the equation:
\begin{description}
  \item[Both $\delta_{p,q}(s,t)$ and $\delta_{p',q'}(s,t)$ are of type
    (I):] the equation is
\[|p_y-q_y|\sqrt{1+\frac{1}{s^2}}=|p'_y-q'_y|\sqrt{1+\frac{1}{s^2}}\]
and the equation is satisfied if and only if $|p_y-q_y|=|p'_y-q'_y|$,
independent of $s$ and $t$. Hence, the solution set is either $\face$ or
$\emptyset$.
\item[Both $\delta_{p,q}(s,t)$ and $\delta_{p',q'}(s,t)$ are of type
    (II):]
the same argument
as in the previous case applies, so the solution set is either $\face$ or
$\emptyset$.
\item[$\delta_{p,q}$ is of type (I) and $\delta_{p',q'}$ of type
  (II):] we get the equation
\[|p_y-q_y|\sqrt{1+\frac{1}{s^2}}=|p'_x-q'_x|\sqrt{1+s^2}.\]
Since $1+\frac{1}{s^2}=\frac{1+s^2}{s^2}$, this simplifies to
\[|p_y-q_y| = s |p'_x-q'_x|\]
and the solution set is either all of $\face$ (if both absolute values vanish),
the empty set (if only $p'_x-q'_x=0$), or the intersection of $\face$
with the vertical line $s=\frac{|p_y-q_y|}{|p'_x-q'_x|}$ (otherwise).
\item[Both $\delta_{p,q}$ and $\delta_{p',q'}$ are of type (III):]
 Swapping $p,q$ or $p',q'$ if necessary, we get
\[|p_y-t-sq_x|\sqrt{1+\frac{1}{s^2}}=|p'_y-t-sq'_x|\sqrt{1+\frac{1}{s^2}}\]
hence, $(s,t)$ is a solution if and only if $p_y-t-sq_x=p'_y-t-sq'_x$
or $p_y-t-sq_x=-(p'_y-t-sq'_x)$. The first equations yields again either
$\face$, $\emptyset$, or  a vertical line as solution set, the second equation
always defines a line. Exchanging the roles of $p$ and $q$, or the roles of $p'$ and $q'$, or both, does not change the conclusion.
\item[$\delta_{p,q}$ is of type (I) and $\delta_{p',q'}$ is of type (III):]
Swapping $p'$ and $q'$ if necessary, 
the formula is
\[|p_y-q_y|\sqrt{1+\frac{1}{s^2}} = |p_y-t-sq_x|\sqrt{1+\frac{1}{s^2}}.\]
$(s,t)\in\face$ is a solution if $p_y-t-sq_x=p_y-q_y$ or $p_y-t-sq_x=q_y-p_y$,
which is a line equation in both cases. 
\item[$\delta_{p,q}$ is of type (II) and $\delta_{p',q'}$ is of type (III):]
Swapping $p'$ and $q'$ if necessary, we get
\[|p_x-q_x|\sqrt{1+s^2} = |p'_y-t-sq'_x|\sqrt{1+\frac{1}{s^2}}\]
which simplifies to
\[s |p_x-q_x| = |p'_y-t-sq'_x|\]
Here $(s,t)\in\face$ is a solution if and only if $s(p_x-q_x)=p'_y-t-sq'_x$
or $s(p_x-q_x)=-(p'_y-t-sq'_x)$. Again, we obtain a line in both cases.\qedhere
\end{description}
\end{proof}

\begin{definition}\label{def:A}
Let $\mathcal{A}$ denote the line arrangement in $\domain$ formed by the lines in $\mathcal{A}_0$, all lines from the case analysis above, and the vertical line $s=1$.
\end{definition}

\begin{lemma}\label{lem:sizeofA}
The arrangement $\mathcal{A}$ consists of $O(n^4)$ lines.
\end{lemma}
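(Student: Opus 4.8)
The plan is to bound the number of lines by counting the contributions from each of the three sources in \cref{def:A} separately. First I would note that $|\grade(\pres^M)\cup\grade(\pres^N)|\leq 2n$, since each presentation has at most $n$ rows and columns combined, so $\mathcal{A}_0$ contributes at most $O(n)$ lines of the form $L_v$. Second, the single vertical line $s=1$ contributes $O(1)$. The dominant term comes from the refinement equations \eqref{eq:delta_sol}. The first family, $\delta_{p,q}(s,t)=0$ with $p,q$ from the same presentation, involves a pair $(p,q)$ chosen from a set of size at most $n$, hence $O(n^2)$ choices; by \cref{lem:solution_in_face} each such equation contributes at most one line (recall that type (I) and (II) give $\emptyset$ or the whole face, not a line, and type (III) gives a single line), so this family adds $O(n^2)$ lines.

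The second family is the one that produces the quartic bound: an equation $\onehalforone_{pq}\delta_{p,q}(s,t)=\onehalforone_{p'q'}\delta_{p',q'}(s,t)$ is determined by an \emph{unordered pair of unordered pairs} drawn from $\grade(\pres^M)\sqcup\grade(\pres^N)$, a set of size at most $2n$. There are $O(n^2)$ choices for $\{p,q\}$ and $O(n^2)$ choices for $\{p',q'\}$, hence $O(n^4)$ equations in total. By the case analysis in the proof of \cref{lem:solution_in_face}, within any fixed face each such equation has solution set that is either $\emptyset$, the whole face, a single line, or a union of two lines; in every case this contributes $O(1)$ lines to the arrangement. Summing, the refinement adds $O(n^4)$ lines.

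Adding the three contributions $O(n)+O(1)+O(n^2)+O(n^4)$ gives $O(n^4)$ lines in total, which is the claimed bound. The only subtlety worth spelling out is that \cref{lem:solution_in_face} describes the solution set \emph{restricted to a face} of $\mathcal{A}_0$, whereas we want to add global lines to the arrangement; but this is not a real obstacle, since the line (or pair of lines) appearing in the case analysis is always obtained by equating polynomial expressions that are globally defined once the types of $\delta_{p,q}$ and $\delta_{p',q'}$ are fixed, and there are only finitely many (at most four each) type assignments per pair, so each of the $O(n^4)$ equations still contributes only $O(1)$ distinct lines across all faces. Thus the main — and really only — step is the counting argument, and the potential pitfall is simply remembering to count the $\onehalforone$-coefficient cases and the finitely many type combinations as absorbed into the $O(1)$ per-equation bound rather than inflating the exponent.
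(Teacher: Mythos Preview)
Your proof is correct and follows essentially the same approach as the paper: count the $O(n^4)$ equations in \eqref{eq:delta_sol} and observe that each contributes only $O(1)$ lines because the line(s) produced depend only on the type assignment of the $\delta$-functions, of which there are at most $4\times 4$. The paper's proof is terser but makes exactly this point; your version is more explicit about separating the contributions of $\mathcal{A}_0$, the line $s=1$, and the two families in \eqref{eq:delta_sol}, and about the per-face versus global issue, which is fine. One tiny inaccuracy: for the equations $\delta_{p,q}=0$, type (III) can yield two distinct lines (one for (IIIa) and one for (IIIb), since the roles of $p$ and $q$ are swapped), not literally one---but this is still $O(1)$ per equation and does not affect your bound.
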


\begin{proof}
The case analysis in the proof of Lemma~\ref{lem:solution_in_face}  was performed relative to a choice of face $f$ in $\mathcal A_0$.  However, for a fixed choice of an equation in the set of equations \eqref{eq:delta_sol}, the lines which arise in the case analysis depend only on the types of $\delta_{p,q}$ and $\delta_{p',q'}$ on $f$.  There are at most $4\times 4=16$ possible ways of jointly choosing those types, and for a given choice, at most two lines are added to the arrangement.  Hence, each of the $O(n^4)$ equations in the set of equations \eqref{eq:delta_sol} contributes at most a constant number of lines to $\mathcal A$.  The result now follows easily.
\end{proof}

Note that the arrangement $\mathcal A$ depends on the choice of presentations for $M$ and $N$. The next statement says that within a face of the arrangement,
the bottleneck distance is realized by the difference of the pushes
of two fixed grades of the presentations.

\begin{theorem}
\label{thm:delta_theorem}
For any face $f$  of $\mathcal{A}$, there is some choice of $p,q\in \grade(\pres^M)\cup \grade(\pres^N)$ such that $d_B(M^\ell,N^\ell)=c_{pq}\delta_{p,q}(\ell)$ for all $\ell\in \alpha(f)$.
\end{theorem}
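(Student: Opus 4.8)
The plan is to combine the two structural facts already established: Lemma~\ref{lem:bottelneck_structure}, which says that $d_B(M^\ell,N^\ell)$ always equals $c_{pq}\delta_{p,q}(\ell)$ for \emph{some} pair of pushed grades $p,q$ (with $c_{pq}\in\{\tfrac12,1\}$ according to whether $p,q$ come from the same presentation), and Remark~\ref{Rem:Push_Pres}, which tells us that the grades of the slice presentations are exactly the $\lambda^{-1}(\push(v,\ell))$ for $v\in\grade(\pres^M)\cup\grade(\pres^N)$. So for every $\ell\in\alpha(f)$ there is a witnessing pair, but a priori the witnessing pair could jump around as $\ell$ varies within the face. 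The goal is to show it can be taken constant on a whole face of $\mathcal{A}$.

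First I would fix a face $f$ of $\mathcal{A}$ and consider the finite collection of functions $\{c_{pq}\delta_{p,q}\}$ indexed by (unordered) pairs $p,q\in\grade(\pres^M)\cup\grade(\pres^N)$. By Lemma~\ref{lem:delta_invariance} (which applies since $f$ refines a face of $\mathcal{A}_0$), each such function is given on $f$ by a single analytic formula of type (I), (II), (IIIa), or (IIIb). By Lemma~\ref{lem:bottelneck_structure} and Remark~\ref{Rem:Push_Pres}, for each $\ell\in\alpha(f)$ the value $d_B(M^\ell,N^\ell)$ equals $c_{pq}\delta_{p,q}(\ell)$ for at least one pair in this finite list; equivalently, $d_B(M^\ell,N^\ell)=\max_{p,q} \big(c_{pq}\delta_{p,q}(\ell)\,\mathbb{1}[\text{this pair is a valid witness at }\ell]\big)$ --- but more usefully, $d_B(M^\ell,N^\ell)$ is one of finitely many analytic functions at each point, and it is continuous on $\alpha(f)$ by Lemma~\ref{lem:F_cont}. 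The key point I want to extract from the refinement is that the arrangement $\mathcal{A}$ contains, by construction (Definition~\ref{def:A} together with the equations~\eqref{eq:delta_sol}), every line along which two of these functions $c_{pq}\delta_{p,q}$ and $c_{p'q'}\delta_{p'q'}$ can cross, and every line along which one of them vanishes. Hence on the open face $f$ the functions $c_{pq}\delta_{p,q}$ are pairwise either identically equal or nowhere equal, and each is either identically zero or nowhere zero; in particular they are totally ordered pointwise by a single ordering valid throughout $f$.

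Given this total order, the argument concludes as follows. Let $f'\subseteq f$ be the (relatively open, dense) subset where $d_B(M^\ell,N^\ell)>0$; actually I would first dispose of the case where $d_B\equiv 0$ on $f$, which forces some $\delta_{p,q}\equiv 0$ there (e.g. any pair witnessing a point of $D(M^\ell)$) and we are done trivially. Otherwise, at each $\ell\in\alpha(f)$ with $d_B(M^\ell,N^\ell)>0$, the value is $c_{pq}\delta_{p,q}(\ell)$ for some pair, and since the finitely many functions are totally ordered on $f$, there is a pointwise-largest among those that ever serve as the bottleneck value --- but I need to be slightly careful: different points could use different witnesses that happen to be distinct functions. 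The clean way is: partition $\alpha(f)$ by which function in the (finite, totally ordered) list equals $d_B$; each such piece is $\{\ell : d_B = c_{pq}\delta_{p,q}\}$ which, because all the pairwise-difference and vanishing loci lie on lines of $\mathcal{A}$ and hence miss the open face, is either all of $\alpha(f)$ or empty. Thus a single pair $p,q$ works for the entire face. The main obstacle I anticipate is precisely this last bookkeeping step: ruling out the scenario in which the witness must switch between two functions that agree on a measure-zero subset of $f$; this is exactly what forcing $\delta_{p,q}=\delta_{p'q'}$ (suitably scaled) into the arrangement via~\eqref{eq:delta_sol} is designed to prevent, so the proof amounts to carefully citing Lemma~\ref{lem:solution_in_face} to see that those coincidence loci are lines, hence avoid the open two-dimensional face.
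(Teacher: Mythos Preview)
Your approach is correct and takes a genuinely different route from the paper's. The paper first establishes a stronger structural fact (\emph{diagram templates}): on each face $f$ the persistence diagrams $D(M^\ell)$ and $D(N^\ell)$ admit a fixed combinatorial description as pushes of a fixed multiset of grade-pairs, because the pushed grades keep the same relative order throughout $f$; it then proves a small combinatorial lemma saying that weight-order-preserving bijections between persistence diagrams carry bottleneck-realizing weights to bottleneck-realizing weights, and transports the witness from one slice to all others via the template bijections. You instead combine Lemma~\ref{lem:bottelneck_structure} with Remark~\ref{Rem:Push_Pres} (so $d_B$ is always one of finitely many $c_{pq}\delta_{p,q}$), the construction of $\mathcal{A}$ (so these functions are strictly totally ordered on $f$ up to coincidence), and continuity of $d_B$ from Lemma~\ref{lem:F_cont}, and conclude by a clopen/connectedness argument that the witnessing index is constant on $f$. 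Your argument is shorter and avoids the template machinery, but it imports Landi's stability theorem through Lemma~\ref{lem:F_cont}; the paper's proof is self-contained in this respect, and its template lemma is of independent interest (it underlies the line-arrangement data structure of \cite{Lesnick2015b}). Two small points: your separate handling of the case $d_B\equiv 0$ is unnecessary and its stated justification is not quite right---the clopen argument covers this case uniformly; and your final sentence credits the all-or-nothing dichotomy to the arrangement alone, whereas the arrangement only delivers the strict ordering---the passage from ``strictly ordered candidates'' to ``constant witness'' genuinely requires continuity of $d_B$ plus connectedness of $f$, which you have available but should invoke explicitly at that step.
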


\iffullversion
\noindent The formal proof of this result is deferred to Appendix~\ref{sec:delta_theorem}. It can be summarized as follows:
\else
\noindent See the full version~\cite[App. A]{klo-full} for a complete proof.
It can be summarized as follows:
\fi
By Remark \ref{Rem:Push_Pres} and Lemma \ref{Lem:Pres_Grades_And_Persistence_Diagram}, for each $\ell\in\face$  there is a collection $T_M^\ell$ of pairs \[(b,d) \in \grade(P^M)\times (\grade(P^M)\cup\{(\infty,\infty)\})\]  such that $D(M^\ell)$ is obtained by pushing the elements of $T_M^\ell$ onto $\ell$.    Call such $T_M^\ell$ a \emph{template} for $D(M^\ell)$.  For $\ell$ and $\ell'$ such that the grades of $P^M$ push onto~$\ell$ and~$\ell'$ in the same order, the templates for $D(M^\ell)$ and $D(M^{\ell'})$ are the same.  But in fact, the grades of $P^M$ push onto all lines $\ell\in \face$ in the same order, because whenever the order changes, we need to cross one of the lines of the arrangement $\mathcal{A}$.  Hence, there exists $T_M^\face$ that is a template for $D(M^\ell)$, for all $\ell\in\face$.  Similarly for $N$.  

By Lemma \ref{lem:bottelneck_structure}, for any fixed $\ell'\in \face$ we have that $d_B(M^{\ell'},N^{\ell'})=c_{pq}\delta_{p,q}(\ell')$, for $p$ and $q$ each some coordinate of a pair in $T_M^\face\cup T_N^\face$. The order of the values of the functions
    \[\{c_{rs}\delta_{r,s}\mid r,s\in \grade(\pres^M)\cup \grade(\pres^N)\}\] remains the same
    across~$\face$, because any change in the order will result
    again in crossing one of the lines of the arrangement $\mathcal{A}$.  
Thus, in fact $d_B(M^{\ell},N^{\ell})=c_{pq}\delta_{p,q}({\ell})$ for all $\ell\in \face$.

\section{Maximization}
\label{sec:maximization}
We define a \emph{region} of $\mathcal{A}$ as the closure of a face of $\mathcal{A}$
within $\domain$.
We can compute the matching distance by determining $\sup F(s,t)$ separately for
each region in $\mathcal{A}$. 
We will show now that in each region, $\sup F(s,t)$ is either realized
at a boundary vertex, or as the limit of an unbounded boundary edge, which can
be computed easily.

We fix the following notation:
A region $\region\subseteq\domain$ is an \emph{inner region} if it is bounded
as a set in $\R^2$ and it has a positive distance to the vertical line $s=0$ in $\R^2$
(in other words, $\region$ does not reach the boundary of $\domain$). 
An inner region is a convex polygon.
Regions that are not inner region are called \emph{outer regions}.
Outer regions have exactly two \emph{outer segments} in their boundary,
which are infinite or converge to a point on the vertical line $s=0$
(except for the empty arrangement, which only consists of one outer region).
See Figure~\ref{fig:segments} for an illustration.

\begin{figure}
\centering
\includegraphics[width=5cm]{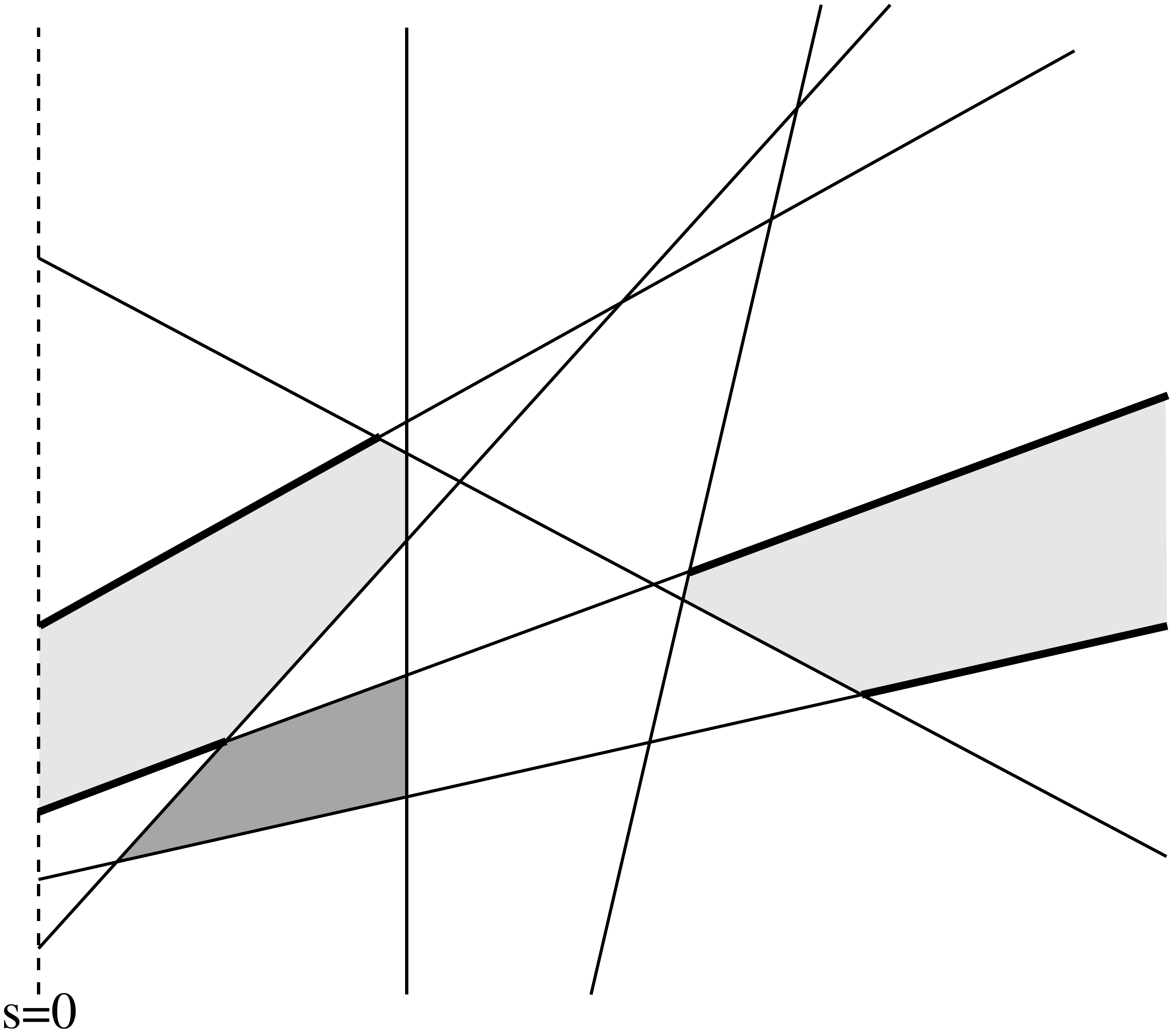}
\caption{The lightly shaded regions show outer regions.  The outer segments of these are drawn more thickly.  The darkly shaded region is an example of an inner region.}
\label{fig:segments}
\end{figure}

For a fixed region $\region$ of $\mathcal{A}$, \cref{thm:delta_theorem}
ensures that there is a pair of grades $(p,q)$ 
whose $\delta$-function realizes $d_B$ within the interior of $\region$ (a face
of $\mathcal{A}$). By continuity, this implies 
that $\delta_{p,q}$ also realizes $d_B$ on the entire region.

\begin{lemma}\label{lem:maximum_region}
The supremum of $F$ within $\region$ is attained either at a vertex on
the boundary of~$\region$, or as the limit of $F$ along an outer segment.
In the latter case, the limit can
be expressed in simple terms based on the equation of the 
line segment and one of the functions~$\delta_{p,q}$.
\end{lemma}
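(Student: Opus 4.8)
The plan is to use \cref{thm:delta_theorem} to turn $F|_\region$ into an explicit elementary function of $(s,t)$, and then to run a short case analysis on the shape of that function.

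First I would reduce to a single closed form. By \cref{thm:delta_theorem} and the continuity remark preceding this lemma, there is a fixed pair $p,q\in\grade(\pres^M)\cup\grade(\pres^N)$ with $F(s,t)=w(\alpha(s,t))\cdot c_{pq}\,\delta_{p,q}(s,t)$ throughout $\region$. Since $s=1$ is one of the lines of $\mathcal{A}$, the interior of $\region$ lies in $\{s>1\}$ or in $\{s<1\}$, so on $\region$ the weight $w$ is given by a single formula, either $\tfrac{1}{\sqrt{1+s^2}}$ or $\tfrac{s}{\sqrt{1+s^2}}$; and since the lines $L_v$, $v\in\grade(\pres^M)\cup\grade(\pres^N)$, lie in $\mathcal{A}_0\subseteq\mathcal{A}$, \cref{lem:delta_invariance} gives that $\delta_{p,q}$ is given on $\region$ by a single one of the formulae (I), (II), (IIIa), (IIIb). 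Multiplying and simplifying via $\sqrt{1+\tfrac1{s^2}}=\tfrac{\sqrt{1+s^2}}{s}$, one checks that (writing $c:=c_{pq}>0$, and up to exchanging $p$ and $q$) $F|_\region$ is exactly one of: (a) a constant, $c\,|p_x-q_x|$ or $c\,|p_y-q_y|$; (b) $c\,|p_y-q_y|/s$, on a region contained in $\{s\ge1\}$; (c) $c\,|p_x-q_x|\,s$, on a region contained in $\{s\le1\}$; (d) $c\,|p_y-t-sq_x|/s$, on a region contained in $\{s\ge1\}$ with $\delta_{p,q}$ of type (III); (e) $c\,|p_y-t-sq_x|$, on a region contained in $\{s\le1\}$ with $\delta_{p,q}$ of type (III).

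Cases (a)--(c) are immediate. In case (a), $\sup F$ is attained at every point, in particular at a boundary vertex (every region has a vertex, barring the degenerate arrangement with no intersection points, which occurs only when $M=N=0$, where $F\equiv0$). In cases (b) and (c), $F$ is independent of $t$ and monotone in $s$, so it is maximized on $\region\cap\{s=s^*\}$, where $s^*$ is the extreme value of $s$ over $\region$ on the appropriate side ($s\ge1$ for (b), $s\le1$ for (c)). Because $s=1$ is the only vertical line of $\mathcal{A}$, this set is either a single vertex of $\region$ or an edge of $\region$ lying on $\{s=1\}$; in the latter case $F$ is constant along the edge and hence attains its supremum at a bounding vertex. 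Either way, $\sup F$ is attained at a vertex of $\region$.

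It remains to handle the bivariate cases (d) and (e). Here $\delta_{p,q}$ has type (III), which means the slice lies below one of $p,q$ and above the other; hence $\region$ is contained in the strip bounded by the lines $L_p$ and $L_q$, so for each fixed $s$ the coordinate $t$ ranges over a bounded interval. In case (e), combining this with $s\in(0,1]$ shows $\region$ is bounded, and $F=c\,|p_y-t-sq_x|$, being the absolute value of an affine function, is convex; it therefore extends continuously to the compact set $\overline{\region}\subseteq\R^2$ and attains its maximum at an extreme point, which is either a vertex of $\region$ or a point $(0,t_1)\in\overline{\region}\cap\{s=0\}$ reached as the limit along one of the (at most two) outer segments, where $F\to c\,|p_y-t_1|$. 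In case (d), the strip containment forces every nonzero recession direction of $\region$ to have positive first coordinate, so each unbounded edge of $\region$ is an infinite outer segment along which $s\to\infty$; a short computation shows that along such an edge of slope $m$ one has $F=c\,|p_y-t-sq_x|/s\to c\,|m+q_x|$, a finite limit. Since $\{F\le\kappa\}\cap\region=\{|p_y-t-sq_x|\le\kappa s\}\cap\region$ is an intersection of two half-planes, $F|_\region$ is quasiconvex, so its supremum over the convex polyhedron $\region$ equals the maximum of the finitely many vertex values of $F$ and the (finitely many, finite) limits of $F$ along the outer segments; hence it is attained at a vertex or equals such a limit, in both cases expressible in closed form from the equation of an outer segment and the coordinates of $p$ and $q$.

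The main obstacle is this last step: one must rule out that $\sup F$ escapes along a direction of $\region$ not captured by the two outer segments -- in particular along a vertical direction, where $F$ could a priori grow without bound. The containment of a type-(III) region in the strip between $L_p$ and $L_q$, together with the constraint $s\le1$ or $s\ge1$ imposed by the line $s=1\in\mathcal{A}$, is precisely what prevents this; recognizing that this containment is available is the crux, while the remaining estimates along outer segments are routine.
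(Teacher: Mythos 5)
Your proof is correct and follows essentially the same case analysis as the paper's: split on the type (I)/(II)/(III) of $\delta_{p,q}$ (justified by \cref{lem:delta_invariance}) and on $s\lessgtr1$ (justified by the line $s=1\in\mathcal A$), then observe that in cases (I) and (II) the factors $\sqrt{1+s^2}$ and $\sqrt{1+\tfrac1{s^2}}$ cancel or collapse to a monotone univariate function, while in case (III) one gets $c\,|p_y-t-sq_x|$ (for $s\le1$) or $c\,|p_y-t-sq_x|/s$ (for $s\ge1$), and the strip between $L_p$ and $L_q$ bounds the region in the $t$-direction. The one place you depart from the paper is the type-(III), $s\ge1$ sub-case: the paper runs a direct Bolzano--Weierstrass sequence argument, extracting a subsequence with $s_i\to\infty$ and $t_i/s_i\to r'\in[r_1,r_2]$, whereas you appeal to quasiconvexity (sublevel sets $\{|p_y-t-sq_x|\le\kappa s\}$ are intersections of half-planes). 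Both are valid; your route is arguably more conceptual, but the step ``$F|_\region$ is quasiconvex, \emph{so} its supremum equals the max of vertex values and outer-segment limits'' compresses a genuine lemma: quasiconvexity gives $\sup_\region F=\sup_{\partial\region}F$ and that the sup along each \emph{edge} is an endpoint value or an edge limit, but to exclude escape through the interior of an unbounded region one must also note that $F$ tends to $c\,|r'+q_x|$ along any ray of slope $r'$ in the recession cone, with $r'$ confined to $[r_1,r_2]$ by the strip containment --- which is exactly the computation the paper makes explicit. You do perform this ray computation, so the argument is complete; it would just read better if the quasiconvexity step and the ray-limit bound were spelled out as the two halves of the boundary-reduction rather than joined by a bare ``so''.
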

\begin{proof}
We distinguish $6$ cases based on the type of the $\delta$-function
and on whether $s\leq 1$ or $s\geq 1$ (note that each cell belongs to
one of these cases, because the line $s=1$ is in $\mathcal{A}$).

\begin{description}
\item [$\delta_{p,q}$ of type (I), $s\leq 1$] In that case, 
\[F(\ell)=w(\ell)\delta_{p,q}(\ell)=\frac{1}{\sqrt{1+\frac{1}{s^2}}}|p_y-q_y|\sqrt{1+\frac{1}{s^2}}=|p_y-q_y|,\]
a constant function. Clearly, the supremum is attained everywhere, in particular at the boundary vertices of $\region$.

\item [$\delta_{p,q}$ of type (I), $s\geq 1$] We get
\[F(\ell)=\frac{1}{\sqrt{1+s^2}}|p_y-q_y|\sqrt{1+\frac{1}{s^2}}=\frac{1}{s}|p_y-q_y|\]
Clearly, this function becomes larger when $s$ gets smaller. Moreover,
because $s\geq 1$ within the cell, there is a leftmost vertex on the
boundary, which minimizes $s$ and therefore attains the supremum
within the cell.

\item [$\delta_{p,q}$ of type (II), $s\leq 1$] We obtain
\[F(\ell)=\frac{1}{\sqrt{1+\frac{1}{s^2}}}|p_x-q_x|\sqrt{1+s^2}=s|p_x-q_x|.\]
Similarly to the previous case, there exists a rightmost boundary vertex in the cell (because $s\leq 1$), which realizes the supremum.

\item [$\delta_{p,q}$ of type (II), $s\geq 1$] The function simplifies to
\[F(\ell)=\frac{1}{\sqrt{1+s^2}}|p_x-q_x|\sqrt{1+s^2}=|p_x-q_x|,\]
a constant function, which attains its supremum at any boundary vertex.

\item [$\delta_{p,q}$ of type (III), $s\leq 1$] Assuming that $p$ lies above the slice, we get
\[F(\ell)=\frac{1}{\sqrt{1+\frac{1}{s^2}}}|p_y-t-sq_x|\sqrt{1+\frac{1}{s^2}}
=|p_y-t-sq_x|.\]
If $\region$ is an inner region, we are maximizing the above function
over a closed convex polygon, and the maximum
is achieved at a boundary vertex, because $|p_y-t-sq_x|$
is the maximum of two linear functions in $s$ and $t$.

It remains to analyze the case that $\region$ is an outer region.
We argue first that $\region$ is bounded in $t$-direction from above and below:
Since $\delta_{p,q}$ is of type (III), with $p$ lying above $\ell$, 
 $(s,t)$ must be below the 
(non-vertical) line $t=-sp_x+p_y$ in the dual space.
Likewise, since $q$ is below $\ell$,
$(s,t)$ must be above $t=-sq_x+q_y$.
Moreover, we have $0<s\leq 1$.
If the above lines intersect at a point $r$ with $s$-value in $(0,1)$,
$\region$ is contained in the triangle spanned by the two lines
and the vertical line $s=0$. Otherwise, $\region$ is contained
in the trapezoid induced by these two lines and the vertical lines 
$s=0$ and $s=1$.

It follows that the two outer segments of $\region$ converge to the
vertical line $s=0$. Let $(0,t_1)$ denote the limit of the lower outer
segment and $(0,t_2)$ the limit of the upper outer segment.  Clearly
$t_1\leq t_2$.  Let $\bar{\region}$ denote the union of $\region$
with the vertical line segment from $(0,t_1)$ to $(0,t_2)$; note that
$\bar{\region}$ is the closure of $\region$ considered as a subset of
$\R^2$.  Observe that $|p_y-t-sq_x|$ is continuous over $\R^2$;
therefore $F$ can be continuously extended to $\bar{\region}$.  It
follows that the supremum of $F$ over $\bar{\region}$ is attained at a
boundary vertex, since $\bar{\region}$ is a convex closed polygon.
There are two cases: either the maximum is attained at a vertex of
$\mathcal{A}$, or at $(0,t_1)$ or $(0,t_2)$. As we can readily see,
the function values at the latter two points are $|p_y-t_1|$ and
$|p_y-t_2|$, respectively. The case where $p$ is below the slice and
$q$ is above is analyzed in the same way, \mbox{with the roles of $p$ and $q$
swapped}.

\item [$\delta_{p,q}$ of type (III), $s\geq 1$] Assuming that $p$ lies above the slice, we get
\[F(\ell)=\frac{1}{\sqrt{1+s^2}}|p_y-t-sq_x|\sqrt{1+\frac{1}{s^2}}=
|\frac{p_y}{s}-\frac{t}{s}-q_x|\]

We first consider the case where $\region$ is an inner region, and we
show that the function is maximized at a boundary vertex of $\region$.
The function $|\frac{p_y}{s}-\frac{t}{s}-q_x|$ has no local maximum
over $\R^2$ since it is the absolute value of a linear function in $t$
for any fixed $s$.  Hence, the supremum over $\region$ must be
attained on the boundary. We have to exclude the case that the maximum
lies in the interior of an edge. For vertical edges this is obvious,
because for a constant $s$, the function simplifies to the absolute
value of a linear function in $t$ which must be maximized at a
boundary vertex.  For a non-vertical line of the form $t=as+b$,
plugging in this equation for $t$ yields a function of the form
\[|\frac{p_y}{s} -\frac{as+b}{s}-q_x|=|\frac{1}{s}(p_y-b) - a -q_x|.\] 
This is the absolute value of a monotone function in $s$ and hence has no local maximum.
Again, this implies that it is maximized at a boundary vertex.

\medskip

Consider now the case where $\region$ is an outer region.
As in the previous case, $\region$ is upper and lower bounded by two non-vertical
lines, because we assume type (III).
Hence, the two outer segments of $\region$ cannot be vertical;
the lower outer segment has a slope $r_1$ and the upper outer segment has a slope $r_2$
with $r_1<r_2$.
We argue next that the supremum of $\frac{p_y}{s}-\frac{t}{s}-q_x$ is either attained
at a boundary vertex, or equal to $|r_1+q_x|$, or equal to $|r_2+q_x|$.
Let $(s_i,t_i)$ denote a sequence of points in $\region$ 
such that $F(s_i,t_i)$ converges to the supremum.
If $(s_i,t_i)$ converges to a point in $\region$ (or has at least a convergent
subsequence), it follows (similarly to the case of an inner region) that 
the limit point is a boundary vertex. Otherwise, we can assume (by passing to
a subsequence) that the sequence $s_i$ is unbounded. Moreover, 
the sequence $\frac{t_i}{s_i}$ is bounded by $[r_1,r_2]$ and therefore has
a convergent subsequence with limit $r'$. Passing to this subsequence, we obtain that
\[\lim_{i\to\infty} F(s_i,t_i)=\lim_{i\to\infty} |\frac{p_y}{s_i}-\frac{t_i}{s_i}-q_x|
=|-r'-q_x|=|r'+q_x|\]
Hence, the supremum must be of the form $|r'+q_x|$ for some $r'\in[r_1,r_2]$.
On the other hand, this expression is clearly maximized for either
$|r_1+q_x|$ or $|r_2+q_x|$, and there exist sequences attaining these values,
for instance when choosing $(s_i,t_i)$ on either of the outer segments.
The case where $p$ lies below the slice and $q$ lies above is treated similarly, with the roles of $p$ and $q$ exchanged.\qedhere
\end{description}
\end{proof}

\subparagraph{The algorithm} We now give the algorithm to compute the matching distance:

\begin{itemize}
\item Compute the arrangement induced by $\mathcal{A}$ from Definition~\ref{def:A}.
\item For each vertex $(s,t)$ in the arrangement, compute $F(s,t)$. Let $m$ be the maximum among all the values.
\item For each outer region $\region$,
pick a point $(s,t)$ in the interior. 
Compute the bottleneck distance and identify a pair $(p,q)$ of grades
that realizes the bottleneck.
Determine whether $p$ and $q$ are
above or below the slice $(s,t)$.  If the region is of type (IIIa) with
respect to $p$ and $q$, then do the following:
\begin{itemize}
\item If $\region$ is to the left of $s=1$, compute the intersections $(0,t_1)$,
$(0,t_2)$ of the 
outer segments of $\region$ with the vertical line $s=0$.
Set $m\gets\max\{m,\ |p_y-t_1|,\ |p_y-t_2|\}$.
\item If $\region$ is to the right of $s=1$, let $r_1$, $r_2$ denote the slopes of
  the outer segments of $\region$. Set $m\gets\max\{m,\ |r_1+q_x|,\ |r_2+q_x|\}$.
  \end{itemize}
If the region is of type (IIIb), proceed analogously, with the roles
of $p$ and $q$ exchanged.
\item Return $m$.
\end{itemize}

By ``computing the arrangement'', we mean to store the planar subdivision induced by the lines of the arrangement (e.g.\cite[Ch.2]{dutch}).
In fact, it is possible to implement the algorithm without explicitly constructing the line arrangement $\mathcal A$ 
or even storing its entire set of vertices.  This reduces the space
complexity of the algorithm.
\iffullversion
See Appendix~\ref{app:space_efficient} for the details.
\else
See~\cite[App. B]{klo-full} for details.
\fi
\begin{theorem}
The above algorithm computes the matching distance in polynomial time.
\end{theorem}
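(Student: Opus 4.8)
The plan is to establish two claims: that the value $m$ returned by the algorithm equals $\dmatch(M,N)$, and that the algorithm runs in time polynomial in $n$.

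For correctness I would first reduce the global supremum to a finite maximum. The regions of $\mathcal{A}$ --- the closures within $\domain$ of the faces of $\mathcal{A}$ --- cover $\domain$, so $\dmatch(M,N)=\sup_\domain F=\max_\region\sup_\region F$, a maximum over $O(n^8)$ regions. By \cref{lem:maximum_region} (whose proof uses the continuity of $F$ from \cref{lem:F_cont}), for each region $\region$ the value $\sup_\region F$ is attained either at a boundary vertex of $\region$ or as the limit of $F$ along one of the two outer segments of $\region$; moreover, running through the six cases in the proof of that lemma shows the second alternative can occur only when $\delta_{p,q}$ has type (III) on $\region$ and $\region$ is an outer region --- in the type (I) and (II) cases, and for inner regions of type (III), $F$ is constant or monotone in the unbounded direction and its supremum sits at a vertex. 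Hence $\dmatch(M,N)$ equals the maximum of (i) $F$ over all vertices of $\mathcal{A}$, and (ii) the outer-segment limits of $F$ over the type-(III) outer regions. I would then verify, branch by branch against \cref{lem:maximum_region}, that the algorithm computes exactly this: it evaluates $F$ at every vertex, and for every outer region it uses one interior bottleneck computation to identify the pair $(p,q)$ realizing $d_B$ there --- which by \cref{thm:delta_theorem} realizes $d_B$ on all of $\region$ --- together with the side of the slice on which $p$ and $q$ lie, and in the type-(IIIa)/(IIIb) cases it substitutes the equations of the two arrangement lines supporting the outer segments into the closed forms $|p_y-t_1|,|p_y-t_2|$ (when $\region$ lies left of $s=1$) or $|r_1+q_x|,|r_2+q_x|$ (when it lies right of $s=1$) derived in the proof of \cref{lem:maximum_region}. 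This gives $m=\dmatch(M,N)$.

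For the running time, by \cref{lem:sizeofA} the arrangement $\mathcal{A}$ has $K=O(n^4)$ lines, whose defining equations have rational coefficients of bit size polynomial in the input; the induced planar subdivision then has $O(K^2)=O(n^8)$ vertices, edges and faces, of which $O(K)=O(n^4)$ are outer, and it can be constructed in $O(n^8)$ time by a standard arrangement algorithm. Every vertex is the intersection of two of the lines, hence has coordinates that are exact rationals of polynomially bounded size. Evaluating $F$ at a vertex reduces, via \cref{Rem:Push_Pres}, to assembling the induced presentations of the two slice modules in linear time and computing their bottleneck distance, which costs $O(n^\omega)$ as recorded after \cref{lem:bottelneck_structure}; summed over all vertices this is $O(n^{8+\omega})$, and the per-outer-region work (one bottleneck computation plus $O(1)$ exact arithmetic) totals $O(n^{4+\omega})$. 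Since every arithmetic operation is on numbers of polynomially bounded bit size, the total running time is polynomial in $n$, hence in the size of the input presentations; in particular, when the bigrades are rational this is exact rational arithmetic and $\dmatch(M,N)$ is rational.

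I expect the main obstacle to be exhaustiveness rather than depth: the substantive work is already carried by \cref{thm:delta_theorem} and \cref{lem:maximum_region}, and what remains is to match the algorithm's per-region computations against every branch of that lemma's case analysis --- above all, to confirm that for outer regions outside type (III) the supremum cannot escape to infinity, which is exactly where the presence of the line $s=1$ in $\mathcal{A}$ is needed to trap such a region in a half-plane on which $F$ is constant or monotone toward a vertex --- and to dispose carefully of the degenerate configurations (a trivial arrangement consisting of a single region; regions whose outer segments converge to $s=0$; regions with vertical boundary edges), none of which require new ideas but all of which must be accounted for so that no candidate value for the supremum is overlooked.
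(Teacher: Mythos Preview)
Your proposal is correct and follows essentially the same approach as the paper: correctness is reduced to \cref{lem:maximum_region} (vertices cover inner regions, outer segments handle the type-(III) outer regions), and the running time follows from the $O(n^4)$ bound on the number of lines together with the cost of a bottleneck computation per vertex and per outer region. Your argument is somewhat more explicit than the paper's in justifying why non-type-(III) outer regions need no limit check, and you quote the sharper $O(n^\omega)$ bound for the per-vertex bottleneck computation, yielding $O(n^{8+\omega})$ rather than the paper's $O(n^{11})$, but the structure of the proof is the same.
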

\begin{proof}
Correctness follows from Lemma \ref{lem:maximum_region}: as we check all
vertices of the arrangement, we cover the supremum of all inner regions.
The outer regions are handled separately
in the last steps of the algorithm.

Running time: recall from Lemma~\ref{lem:sizeofA} that we have $O(n^4)$ lines in the arrangement~$\mathcal A$. 
Hence, the arrangement has $O(n^8)$
vertices, $O(n^4)$ outer regions, and can be computed in $O(n^8\log n)$ time
using an extension of the Bentley-Ottman sweep-line algorithm~\cite{bo-arcgi-79}. 
For each vertex and each
outer region, we have to compute two persistence diagrams, 
which can be done in $O(n^3)$ time, and a bottleneck distance whose
complexity can be neglected. The remaining computations are negligible.
Hence, we arrive at a $O(n^{11})$ algorithm.
\end{proof}

We remark that the algorithm can be realized entirely with
rational arithmetic if all grades are rational numbers. 
Indeed, all lines in the arrangement
have rational coefficients, and so do their intersection points.
An intersection points corresponds to a slice along which we are required
to compute the bottleneck distance. 
Recall from \cref{sec:matching_distance} that the definition of the slice $\ell$
module introduces a grade of $\lambda^{-1}(\push(p,\ell))$ where $\lambda^{-1}(p_x,p_y)=\sqrt{1+s^2} p_x$.
Hence, these grades are not rational numbers.
However, the bottleneck distance is multiplied with the weight $w(\ell)$ of the slice afterwards,
and instead of doing so, one can as well scale all grades with $w(\ell)$ in advance.
A simple calculation shows that this indeed turns the grades into rational values.

A simple analysis also reveals that if the input coordinates are rational and of bitsize $\leq b$, 
all intermediate computations in the algorithm can be performed with a bitsize of $\leq cb$,
with $c$ a (small) constant. Hence, the algorithm is strongly polynomial.

\section{Discussion}
\label{sec:discussion}
We have presented the first polynomial time algorithm to exactly compute the matching
distance for 2-parameter persistence modules. 
It is natural to ask about practicality of our approach.
The large exponent of $n^{11}$ seems discouraging at first, but we mention
first that the worst-case running time of $O(n^3)$ for persistent homology
is usually not appearing for real instances; indeed an almost linear behavior
can be expected.
Still, the large number of $O(n^4)$ lines in the arrangement 
constitutes a computational barrier in practice. 
There are several possibilities, however, to reduce this effect:
\begin{itemize}
\item Instead of computing the arrangement $\mathcal{A}$ globally,
we could compute the intermediate arrangement $\mathcal{A}_0$
and refine each face of it separately, using only those lines
that affect the $\delta$-functions within this face.
\item As a follow-up to the previous point, it might be possible
to compute a smaller arrangement per face adaptively. The idea is
to start at some interior point in a face of $\mathcal{A}_0$,
identifying a pair $(p,q)$ that realizes the bottleneck
distance and then to determine the boundary of the region where
$(p,q)$ realizes the bottleneck distance.  

\item As a preprocessing step, we can minimize the input presentations, yielding presentations for the same modules with the smallest possible number of generators and relations 
(hence minimizing~$n$) \cite{lesnick2018computing}.  
\end{itemize}
We pose the question of whether an implementation realizing the above ideas
is competitive to an approximative, sampling-based approach 
for computing the matching distance.

Our algorithm needs to treat the outer edges of the arrangement $\mathcal{A}$
separately since our analysis does not rule out the possibility
that the supremum is realized at the boundary of $\domain$. On the other hand,
we are not aware of an example of two finite presentations
whose matching distance is not realized by a particular slice in $\domain$.
A proof that the supremum in the definition of the matching distance
is in fact a maximum would greatly simplify our algorithm, since it would
boil down to computing the intersection points of all lines and 
searching for the maximal $F$-value among them. 

Finally, we have restricted attention to the case of two-parameter persistence modules.
It is natural to conjecture that our algorithm extends to more parameters by constructing a hyperplane arrangement.  It would be worthwhile to check this conjecture in future work.

\bibliography{bib}

\iffullversion


\setcounter{section}{0}
\renewcommand\thesection{\Alph{section}}
\section{Appendix: proof of Theorem~\ref{thm:delta_theorem}}
\label{sec:delta_theorem}

Let $G^M=\grade(\pres^M)$ and let
$G^N=\grade(\pres^N)$.  As an intermediate
result, we prove that the combinatorial structure of each persistence 
diagram stays the same across the face~$\face$:

\begin{lemma}\label{Lem:diagram_template}
For each face $f$ of $\mathcal A$, there exist multisets
\begin{align*}
\mathcal T_M^f&\subset G^M\times (G^M\cup \{(\infty,\infty)\}),\\
\mathcal T_N^f&\subset G^N\times (G^N\cup \{(\infty,\infty)\})
\end{align*}
such that for any $\ell \in \alpha(f)$, 
\begin{align*}
D(M^\ell)&= \left\{(\lambda^{-1} \circ \push(a,\ell),\lambda^{-1} \circ \push(b,\ell))\mid (a,b) \in \mathcal T_M^f\right\},\\
D(N^\ell)&= \left\{(\lambda^{-1} \circ \push(a,\ell),\lambda^{-1} \circ \push(b,\ell))\mid (a,b) \in \mathcal T_N^f\right\},
\end{align*}
where by convention $\push((\infty,\infty),\ell) := (\infty,\infty)$ and $\lambda^{-1}((\infty,\infty)) := \infty$. 
We call $\mathcal T_M^f$ and $\mathcal T_N^f$ \emph{diagram templates}.
\end{lemma}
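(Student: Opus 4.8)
The plan is to fix one slice $\ell_0\in\alpha(f)$, read a diagram template for $D(M^{\ell_0})$ off a reduction of the presentation of $M^{\ell_0}$ induced by $\pres^M$, and then argue that this single template works simultaneously for every slice in $\alpha(f)$, because the reduction stays combinatorially frozen as $\ell$ ranges over the open convex face $f$.

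First I would recall, via Remark~\ref{Rem:Push_Pres}, that for each $\ell\in\alpha(f)$ the module $M^\ell$ has a finite presentation $\pres^M_\ell$ with the same underlying matrix as $\pres^M$ and with each grade $p\in G^M$ replaced by $\lambda^{-1}(\push(p,\ell))$. Running the basic matrix reduction for persistent homology on $\pres^M_\ell$ and invoking Lemma~\ref{Lem:Pres_Grades_And_Persistence_Diagram} (as in its proof, following~\cite{ZomorodianCarlsson2005}), every point of $D(M^\ell)$ has the form $(\lambda^{-1}(\push(a,\ell)),\lambda^{-1}(\push(b,\ell)))$ with $a$ the grade of some row and $b$ either the grade of some column or $(\infty,\infty)$. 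Collecting the pairs $(a,b)$ that realize the points of $D(M^{\ell_0})$ yields a multiset $\mathcal T_M^f\subset G^M\times(G^M\cup\{(\infty,\infty)\})$ that is a diagram template for $D(M^{\ell_0})$; define $\mathcal T_N^f$ analogously from $\pres^N$ and $\ell_0$.

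The main content is to show $\mathcal T_M^f$ is in fact a template for $D(M^\ell)$ for every $\ell\in\alpha(f)$. The combinatorial output of the reduction of $\pres^M_\ell$ --- which rows become pivots, how pivot columns pair with pivot rows, and which paired row--column pairs contribute an off-diagonal point to $D(M^\ell)$ --- depends only on the underlying matrix, on the linear preorder that $\ell$ induces on the grades through $p\preceq_\ell q\iff\lambda^{-1}(\push(p,\ell))\le\lambda^{-1}(\push(q,\ell))$ (with ties broken by a fixed convention, say column index), and on the coincidence relation $\lambda^{-1}(\push(p,\ell))=\lambda^{-1}(\push(q,\ell))$ among grades. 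I claim all of these data are the same for every $\ell\in\alpha(f)$. Indeed, for any grades $p,q$ the quantity $\lambda^{-1}(\push(p,\ell))-\lambda^{-1}(\push(q,\ell))$ is continuous in $\ell$ and has absolute value $\delta_{p,q}(\ell)$; by Lemma~\ref{lem:solution_in_face}, applied within the face of $\mathcal A_0$ that contains $f$, the zero set of $\delta_{p,q}$ is either that entire $\mathcal A_0$-face or is contained in lines of $\mathcal A$, so --- the open face $f$ being disjoint from every line of $\mathcal A$ by Definition~\ref{def:A} --- it meets $f$ either in all of $f$ or not at all. In the first case $\push(p,\ell)$ and $\push(q,\ell)$ coincide along every $\ell\in\alpha(f)$; in the second, the sign of $\lambda^{-1}(\push(p,\ell))-\lambda^{-1}(\push(q,\ell))$ is constant on the connected set $f$ by the intermediate value theorem. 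Either way, the relative order and coincidence status of the pushes of $p$ and $q$ are independent of $\ell\in\alpha(f)$. Applying this to all pairs of grades, the reduction of $\pres^M_\ell$ produces the same pivots, the same pairing, and the same diagram-contributing pairs for all $\ell\in\alpha(f)$, hence $\mathcal T_M^f$ is a template for each $D(M^\ell)$; the same argument with $G^N$ and $\pres^N$ handles $\mathcal T_N^f$, completing the proof.

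The step I expect to be the main obstacle is making the dependence statement for the reduction algorithm precise --- verifying that its combinatorial output is a function only of the matrix together with the order and coincidence data of the grades, and carefully handling ties (several grades whose pushes coincide along $\ell$, or a pivot-row grade coinciding with its pivot-column grade) so that the template is genuinely well defined and $\ell$-independent. Once this dependence is isolated, constancy across $f$ follows at once from Lemma~\ref{lem:solution_in_face} and the convexity of the faces of a line arrangement.
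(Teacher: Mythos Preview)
Your proposal is correct and follows essentially the same approach as the paper's proof: both arguments hinge on showing that the totally ordered partition of $G^M$ induced by $\push(-,\ell)$ is constant across $f$ (because the lines $\delta_{p,q}=0$ lie in $\mathcal A$), and then invoke the standard matrix reduction together with Remark~\ref{Rem:Push_Pres}. The only cosmetic difference is that the paper fixes the common total order first and reduces once, whereas you reduce at a base point $\ell_0$ and argue constancy afterward; the paper also makes explicit the step you flag as the obstacle, namely it permutes the rows and columns of $\pres^M$ to be consistent with the fixed total order before reducing, so that the reduction is literally the same computation for every $\ell\in\alpha(f)$.
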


\begin{proof}
This is a variant of \cite[Theorem 4.1]{Lesnick2015b}.  We give a succinct, algorithmically flavored proof here.  We prove the result for $M$; the proof for $N$ is the same.   

 First, note that the restriction of the partial order on $\R^2$ to any $\ell\in \alpha(f)$ is a total order.  This total order pulls back under the map $\push(-,\ell):\grade(P^M)\to \ell$ to a totally ordered partition of $\grade(P^M)$, (i.e., elements of the partition are level sets). It can be checked that, because $\mathcal A$ refines $\mathcal A_0$ and also contains all lines of the form $\delta_{p.q}=0$ for $p,q\in \grade(P^M)$ with $p$ and $q$ incomparable in the partial order on $\R^2$, this totally ordered partition is the same for all $\ell\in \alpha(f)$; see \cite[Corollary 3.4]{Lesnick2015b}.  
Thus, we obtain a unique totally ordered partition of $\grade(P^M)$ associated to all of $f$.  Let us refine this to a fixed total order on $\grade(P^M)$.  
 
 Permuting the row or columns of a presentation for $M$ yields another presentation for $M$, so we may assume without loss of generality that the order of rows and columns for $P^M$ is consistent with our total order on $\grade(P^M)$.  Applying the matrix reduction underlying the standard persistence algorithm \cite{ZomorodianCarlsson2005} to the matrix underlying $P^M$ yields a matching $\sigma$ of row and column indices, where a non-zero column in the reduced matrix is matched to the row of its pivot.  We may define 
\begin{align*}
\mathcal T_M^f:&=\left\{\grade(\row_i),\grade(\col_j) \mid (i,j)\in \sigma,\ \grade(\row_i),\grade(\col_j)\textup{ not together in the partition} \right\}\\
 &\cup \left\{(\grade(\row_i),(\infty,\infty))\mid i\textup{ is unmatched in }\sigma\right\}.
\end{align*}
By Remark \ref{Rem:Push_Pres} and the correctness of the standard algorithm for computing persistent homology \cite{ZomorodianCarlsson2005}, it follows that 
\[D(M^\ell)= \left\{(\lambda^{-1} \circ \push(a,\ell),\lambda^{-1} \circ \push(b,\ell))\mid (a,b) \in \mathcal T_M^f\right\},\]
as desired.
\end{proof}

Suppose we are given two persistence diagrams $D$ and $D'$.  For each $(x,y)\in D\cup D'$, let $w_0(x,y):=\frac{y-x}{2}$, and for each $(x,y),(x',y')\in D\times D'$, let $w_1(x,x'):=\max(|x-x'|)$, and $w_2(y,y'):=\max(|y-y'|)$.  We call these (possibly infinite) numbers the \emph{weights} of the pair $(D,D')$.  Denoting the set of pairs indexing these weights as $I(D,D')$, the weights define a function $w^{D,D'}:I^{D,D}\to [0,\infty)$.  Note that $d_B(D,D')= w^{D,D'}(x,y)$ for some $(x,y)\in I^{D,D'}$.   

Clearly, a pair of bijections of persistence diagrams $\zeta_1:D\to E$ and $\zeta_2:D'\to E'$ induces a bijection \[\zeta:I(D,D')\to I(E,E').\] 
The following is an easy consequence of the definition of $d_B$. 
\begin{lemma}\label{Lemma:dB_and_Ordering}
If $d_B(D,D')=w^{D,D'}(x,y)$ and $\zeta_1$, $\zeta_2$ preserve the order of the weights, in the sense that \[w^{D,D'}(x,y)\leq w^{D,D'}(x',y')\quad \textup{if and only if} \quad w^{E,E'}(\zeta(x,y))\leq w^{E,E'}(\zeta(x',y')),\] 
then $d_B(E,E')=w^{E,E'}(\zeta(x,y))$.
\end{lemma}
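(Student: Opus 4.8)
The plan is to exploit the fact that $d_B$ is the optimum of a purely combinatorial min--max problem whose ``atomic costs'' are exactly the weights of the pair of diagrams, so that the assertion reduces to a statement about an order-preserving map commuting with $\min$ and $\max$. First I would unwind the definition of $d_B$ from \cref{sec:modules}: a cost-$\epsilon$ matching between $D$ and $D'$ is a partial bijection $\sigma\colon D_0\to D_0'$ with $D_0\subseteq D$ and $D_0'\subseteq D'$, and $\cost(\sigma)$ is the maximum of a finite collection of atomic quantities, each of which is a weight of $(D,D')$: the weight $w_0(p)$ for each unmatched point $p$, and the weights $w_1(a,a')$ and $w_2(b,b')$ for each matched pair $(a,b)\mapsto (a',b')$. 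So to each $\sigma$ I attach the finite set $J_\sigma\subseteq I(D,D')$ of indices of the weights that occur, giving $\cost(\sigma)=\max_{i\in J_\sigma} w^{D,D'}(i)$ and $d_B(D,D')=\min_\sigma\cost(\sigma)$ over all partial matchings. (If $D\cup D'=\emptyset$ there is no index $(x,y)$ and nothing to prove, so we may assume $D\cup D'\neq\emptyset$, in which case every $J_\sigma$ is nonempty.)

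Second I would record the routine observation that $(\zeta_1,\zeta_2)$ induces a bijection $\sigma\mapsto\sigma^{E}$ between partial matchings of $(D,D')$ and partial matchings of $(E,E')$, obtained by transporting the partial bijection through $\zeta_1$ and $\zeta_2$ (the inverse transports back through $\zeta_1^{-1},\zeta_2^{-1}$). The crucial point is that this bijection is compatible with the induced map $\zeta$ on index sets: the unmatched points and the matched pairs of $\sigma^{E}$ are precisely the $\zeta_1$- and $\zeta_2$-images of those of $\sigma$, so $J_{\sigma^E}=\zeta(J_\sigma)$ and hence $\cost(\sigma^E)=\max_{i\in J_\sigma} w^{E,E'}(\zeta(i))$. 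This step is pure bookkeeping, but it carries the actual content, so I would be careful to verify that every atomic cost of $\sigma^E$ corresponds under $\zeta$ to one of $\sigma$, and conversely.

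Third I would use the hypothesis. Reading ``$\zeta_1,\zeta_2$ preserve the order of the weights'' as: for all $i,i'\in I(D,D')$, $w^{D,D'}(i)\le w^{D,D'}(i')$ iff $w^{E,E'}(\zeta(i))\le w^{E,E'}(\zeta(i'))$, applying the ``if and only if'' both ways shows that $w^{E,E'}\circ\zeta$ and $w^{D,D'}$ have the same fibers and induce the same total order on $I(D,D')$; equivalently, $w^{E,E'}\circ\zeta=\rho\circ w^{D,D'}$ for a strictly increasing $\rho$ on the (extended-real) values of $w^{D,D'}$, the infinite weights being handled by the convention $\infty-\infty=0$. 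Since each $J_\sigma$ is finite and nonempty, $\rho$ commutes with the $\max$ over $J_\sigma$, so $\cost(\sigma^E)=\rho(\cost(\sigma))$ for every $\sigma$; being monotone, $\rho$ also commutes with the $\min$ over all $\sigma$, whence $d_B(E,E')=\rho(d_B(D,D'))=\rho(w^{D,D'}(x,y))=w^{E,E'}(\zeta(x,y))$, which is the claim. (One can also bypass $\rho$: an optimal $\sigma_0$ for $(D,D')$ has all atomic costs $\le w^{D,D'}(x,y)$ with at least one equal to it, so by order preservation $\cost(\sigma_0^E)=w^{E,E'}(\zeta(x,y))$, giving ``$\le$''; and a strictly cheaper matching for $(E,E')$ would transport back to a matching for $(D,D')$ of cost $<d_B(D,D')$, which is impossible.)

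I expect the main obstacle to be making the second step fully precise --- the correspondence of partial matchings and, above all, of their atomic-cost index sets under $\zeta$ --- together with keeping the treatment of infinite weights consistent throughout; everything else is a short monotonicity argument.
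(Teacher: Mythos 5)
The paper does not actually prove \cref{Lemma:dB_and_Ordering}: it is stated immediately after the sentence ``The following is an easy consequence of the definition of $d_B$'' and left to the reader. Your proof supplies the missing argument, and it is correct. The core observations --- that $\cost(\sigma)=\max_{i\in J_\sigma}w^{D,D'}(i)$ for a finite nonempty index set $J_\sigma\subseteq I(D,D')$, that $(\zeta_1,\zeta_2)$ induces a bijection $\sigma\mapsto\sigma^E$ on partial matchings with $J_{\sigma^E}=\zeta(J_\sigma)$, and that the hypothesis says $w^{E,E'}\circ\zeta$ and $w^{D,D'}$ induce the same total preorder on $I(D,D')$ --- are precisely what is needed, and the conclusion follows since a strictly monotone reparametrization commutes with finite $\max$ and $\min$. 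One small point worth making explicit in the $\rho$-free variant at the end: the realizing index $(x,y)$ need not lie in $J_{\sigma_0}$ for an optimal matching $\sigma_0$, but some $i_0\in J_{\sigma_0}$ has $w^{D,D'}(i_0)=w^{D,D'}(x,y)$, and the two-sided hypothesis forces $w^{E,E'}(\zeta(i_0))=w^{E,E'}(\zeta(x,y))$; you implicitly handle this by saying ``at least one equal to it,'' so no gap. Your care with the degenerate case $D\cup D'=\emptyset$ and with the extended-real conventions is appropriate but not strictly necessary for the paper's application.
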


\begin{proof}[Proof of Theorem~\ref{thm:delta_theorem}]
Let $\mathcal T_M^f$ and $\mathcal T_N^f$ be diagram templates for the cell $f$.  For any $\ell\in \alpha(f)$, Lemma~\ref{Lem:diagram_template} gives us distinguished bijections 
\[\gamma:\mathcal T_M^f\to D(M^\ell), \qquad \gamma':\mathcal T_N^f\to D(N^\ell).\]
It is easily checked that for $(x,y)\in D(M^\ell)$, and $(p,q)=\gamma^{-1}(x,y)$, we have 
\begin{equation}\label{eq:weights1}
w_0(x,y)=\frac{1}{2}\delta_{p,q}(\ell)=c_{pq}\delta_{p,q}(\ell).
\end{equation}
 Similarly,  for 
\[(x,y)\in D(M^\ell),\ (x',y')\in D(N^\ell),\ (p,p')=\gamma^{-1}(x,y),\textup{ and }(q,q')=\gamma'^{-1}(x',y'),\] we have 
\begin{align}
\label{eq:weights2}
w_1(x,x')&=\delta_{p,q}(\ell)=c_{pq}\delta_{p,q}(\ell),\\
w_2(y,y')&=\delta_{p',q'}(\ell)=c_{p'q'}\delta_{p',q'}(\ell).\label{eq:weights3}
\end{align}

Note that as we move the slice~$\ell$ inside~$f$, the order of the values taken by the functions \[\{c_{pq}\delta_{p,q}\mid p,q\in P^M\cup P^N\}\] cannot change.  Indeed, the functions $\ell \to \delta_{p,q}(\ell)$ are continuous, and by Lemma \ref{lem:solution_in_face} and the definition of the arrangement $\mathcal A$, the intersection of $\face$ with the solution set of each equation $c_{pq}\delta_{p,q}=c_{p'q'}\delta_{p',q'}$ is either $\face$ or $\emptyset$.

The diagram templates provide bijections $D(M^\ell)\to D(M^{\ell'})$ and $D(N^\ell)\to D(N^{\ell'})$ for all $\ell,\ell'\in\alpha(f)$.  
 It now follows from equations (\ref{eq:weights1}-\ref{eq:weights3}) that for each $\ell,\ell'$, these preserve the order on weights.
Thus, by Lemma \ref{Lemma:dB_and_Ordering}, if \[d_B(D(M^{\ell}),D(N^{\ell}))=w^{D(M^{\ell}),D(N^{\ell})}(x,y)\] for some $\ell \in \alpha(f)$, then \[d_B(D(M^{\ell'}),D(N^{\ell'}))=w^{D(M^{\ell'}),D(N^{\ell'})}(\zeta^{\ell,{\ell'}}(x,y))\] for all $\ell'\in \alpha(f)$, where \[\zeta^{\ell,{\ell'}}: I^{D(M^{\ell}),D(N^{\ell})}\to I^{D(M^{\ell'}),D(N^{\ell'})}\] is the bijection induced by the diagram templates.  The result now follows from equations (\ref{eq:weights1}-\ref{eq:weights3}).
\end{proof}

\section{A more space-efficient algorithm}
\label{app:space_efficient}
The algorithm described in Section~\ref{sec:maximization} consists of two major steps:
we first find the maximal value of $F$ over all intersection points of lines in
the arrangement $\mathcal{A}$, and then check the convergence along outer segments
for certain outer regions of $\mathcal{A}$. To decide whether an outer region
needs to be considered, we require one interior point of that region, to decide
the type of the region.
The arrangement $\mathcal{A}$ (stored as a planar subdivision as in~\cite[Ch.2]{dutch})
contains enough information
to access all necessary data conveniently. However, its space complexity
is $O(n^8)$ because the arrangement is induced by $O(n^4)$ lines.  
We show next how to implement the algorithm without storing $\mathcal{A}$
in memory, yielding a space complexity of $O(n^4)$:

Reporting the intersection points of a set of lines in the plane is
one of the oldest problems in computational geometry. The
Bentley-Ottmann sweep-line algorithm~\cite{bo-arcgi-79} reports all
intersection points in time proportional to the number of such
points.  Note that there are up to $O(n^8)$ intersections, so storing
all such points is space consuming.  However, there is no need to do
so~-- whenever the sweep-line algorithm reports a new point, we
compute $F$ at this point and compare the value with the maximal
$F$-value seen before, updating the maximum if the newly encountered
value is larger.  There is no need to store the intersection beyond
this moment.  In this way, we obtain the maximal $F$-value among all
intersection points using only $O(n^4)$ space.

For handling outer regions, note that we can extend the sweep-line algorithm such that it also returns
the leftmost intersection point with positive $s$-coordinate, and the rightmost intersection point, among all pairs of 
lines in $\mathcal A$. Let $0<s_{min}\leq s_{max}$ denote the $s$-coordinates of these intersection points.
Now, consider the vertical line $s=\frac{s_{min}}{2}$, which is partitioned into open intervals
\begin{equation}\label{eq:interval_sequence}
(-\infty,b_1),(b_1,b_2),\ldots,(b_{m-1},b_m),(b_m,\infty)
\end{equation}
where $b_1,\ldots,b_m$ are the intersection points of the non-vertical lines of $\mathcal{A}$ with the vertical line.  Note that $m=O(n^4)$.
The intervals in the sequence can be computed in $O(n^4 \log n)$ time, just by sorting the $b_i$'s.
The intervals in the sequence (\ref{eq:interval_sequence}) are in one-to-one correspondence with the outer regions of $\mathcal{A}$
whose boundary in $\R^2$ has nonempty intersection with the line $s=0$. More precisely, each interval $(b_i,b_{i+1})$ is contained in a unique outer region $R$; for $i\in \{1,\ldots, m-1\}$, the point $(\frac{s_{min}}{2},\frac{b_i\,+\,b_{i+1}}{2})$ is an interior point of $R$; and the outer segments of $R$ contain
the points $b_i$ and $b_{i+1}$.  This information suffices to determine the type of $R$, and to compute the limit values of the $F$-function
if necessary. The two infinite regions corresponding to $(-\infty,b_1)$ and $(b_m,\infty)$
can be ignored, because these regions must be of type (I) or type (II).
The same construction provides interior points of outer regions that are unbounded
in the $s$-direction, considering the vertical line at $s=2s_{max}$.

With this variant, we compute the matching distance with space complexity $O(n^4)$. 
The time complexity
remains $O(n^{11})$ as in the original algorithm, because we still iterate over $O(n^8)$ vertices and compute the bottleneck distance
in each position in $O(n^3)$ time.

\fi

\end{document}